\documentclass[11pt]{amsart}
\usepackage{amsmath, amsthm, amssymb, amsfonts}
\usepackage[normalem]{ulem}
\usepackage{hyperref}
\usepackage{verbatim}

\usepackage{mathtools}

\usepackage{tikz-cd}
\usetikzlibrary{arrows}
\usepackage{lmodern}
\usetikzlibrary{decorations.pathmorphing}
\tikzset{snake it/.style={decorate, decoration=snake}}

\theoremstyle{plain}
\newtheorem{thm}{Theorem}
\newtheorem{cor}[thm]{Corollary}
\newtheorem{lemma}[thm]{Lemma}

\newtheorem{prop}[thm]{Proposition}
\newtheorem{question}[thm]{Question}

\theoremstyle{definition}

\theoremstyle{remark}

\newcommand{\BQ}{{\mathbb{Q}}}

\newcommand{\BZ}{{\mathbb{Z}}}

\newcommand{\CE}{{\mathcal E}}

\newcommand{\CL}{{\mathcal L}}
\newcommand{\CM}{{\mathcal M}}

\newcommand{\CO}{{\mathcal O}}

\DeclareFontFamily{OT1}{rsfs}{}
\DeclareFontShape{OT1}{rsfs}{n}{it}{<-> rsfs10}{}
\DeclareMathAlphabet{\curly}{OT1}{rsfs}{n}{it}

\newcommand{\p}{\mathbb{P}}

\newcommand\Spec{\operatorname{Spec}}

\newcommand{\Pic}{\mathop{\rm Pic}\nolimits}

\newcommand{\Aut}{\mathop{\mathrm{Aut}}}

\begin{document}
\baselineskip=14.6pt

\title[Automorphisms of Hilbert schemes of points on surfaces]
{Automorphisms of Hilbert schemes of points on surfaces}

\author[P.~Belmans]{Pieter Belmans}
\address{Mathematisches Institut, Universit\"at Bonn, Endenicher Allee 60, 53115 Bonn, Germany}
\email{pbelmans@math.uni-bonn.de}

\author[G.~Oberdieck]{Georg Oberdieck}
\address{Mathematisches Institut, Universit\"at Bonn, Endenicher Allee 60, 53115 Bonn, Germany}
\email{georgo@math.uni-bonn.de}
\author[J.~V.~Rennemo]{J\o rgen Vold Rennemo}
\address{Department of Mathematics, University of Oslo, PO Box 1053 Blindern, 0316 Oslo, Norway}
\email{jorgeren@uio.no}

\date{\today}
\begin{abstract}
We show that every automorphism of the Hilbert scheme of $n$ points on a weak Fano or general type surface is natural, i.e.~induced by an automorphism of the surface,
unless the surface is a product of curves and $n=2$.
In the exceptional case there exists a unique non-natural automorphism.
More generally,
we prove that
any isomorphism between Hilbert schemes of points on smooth projective surfaces,
where one of the surfaces is weak Fano or of general type and not equal to the product of curves,
is natural.
We also show that
every automorphism of the Hilbert scheme of $2$ points on $\p^n$ is natural.
\end{abstract}

\maketitle

\setcounter{tocdepth}{1}
\tableofcontents

\section{Introduction}
Let $X$ be a non-singular complex projective surface and let $X^{[n]}$ be the Hilbert scheme of $n$ points on $X$.
Any isomorphism $g\colon X \overset{\sim}{\to} Y$ of smooth projective surfaces induces an isomorphism
\[ g^{[n]}\colon X^{[n]} \overset{\sim}{\to} Y^{[n]}. \]
An isomorphism $\sigma \colon X^{[n]} \overset{\sim}{\to} Y^{[n]}$ is called \emph{natural} if $\sigma = g^{[n]}$ for some~$g$. 
In this paper we investigate which Hilbert schemes of points on surfaces have non-natural automorphisms and isomorphisms.

Consider the case of K3 surfaces.
By a result of Beauville, the Hilbert scheme of points of a K3 surface is a hyperk\"ahler variety \cite[Th\'eor\`eme 3]{Bea}.
Isomorphisms of hyperk\"ahler varieties can be controlled using the global Torelli theorem.
In particular, lattice arguments \cite{Zufetti} show that there exist non-isomorphic K3 surfaces $X_1$ and $X_2$ such that
$X_1^{[2]} \cong X_2^{[2]}$, see also \cite[Example~7.2]{Yoshioka}. 
By construction these isomorphisms are not natural.
Similarly, the involution of the Hilbert schemes of $2$ points on a general quartic K3
that sends a subscheme to the residual subscheme of the line passing through it,
does not preserve the diagonal and is hence not natural, see Beauville \cite[\S6]{Bea2}.
The geometric construction and classification of auto- and isomorphisms of hyperk\"ahler varieties of $\mathrm{K3}^{[n]}$-type
is a rich and beautiful subject in its own right.

From now on we drop the condition on $X$ to be Calabi--Yau.
We first focus on the existence of non-natural automorphisms of $X^{[n]}$. 
By a computation of Boissi\`ere \cite[Corollaire 1]{Bos}, the automorphism groups of $X^{[n]}$ and $X$ have the same dimension
and hence the same identity component.
The question of whether non-natural automorphisms exist
is therefore discrete in nature.

Our first result is the following.
Recall that a surface $X$ is called \emph{weak Fano} if $\omega_X^{-1}$ is nef and big.

\begin{thm} \label{thm1}
Let $X$ be a smooth projective surface which is weak Fano or of general type, and let $n$ be any integer.
Except for the case $(C_1 \times C_2)^{[2]}$, where $C_1$ and $C_2$ are smooth curves, every automorphism of $X^{[n]}$ is natural.
\end{thm}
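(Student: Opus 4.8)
The plan is to reduce the automorphism problem to a question about the underlying surface by producing enough intrinsic, canonically-defined geometric structure on $X^{[n]}$ that any automorphism must preserve.

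The plan is to reconstruct the surface from two pieces of intrinsic structure on $X^{[n]}$ that any automorphism must respect: the canonical bundle, and the Hilbert--Chow contraction $\pi\colon X^{[n]}\to X^{(n)}$ onto the symmetric product $X^{(n)}=\Sym^n X$. Since $\pi$ is a crepant resolution one has $\omega_{X^{[n]}}=\pi^\ast\omega_{X^{(n)}}$, so the (pluri)canonical sections of $X^{[n]}$ are pulled back from $X^{(n)}$. Any automorphism $\sigma$ of $X^{[n]}$ satisfies $\sigma^\ast\omega_{X^{[n]}}\cong\omega_{X^{[n]}}$ and hence preserves the associated (anti)canonical rational map. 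Under our hypotheses this map is birational onto its image: if $X$ is of general type then $\omega_{X^{[n]}}$ is big, while if $X$ is weak Fano then $\omega_{X^{[n]}}^{-1}=\pi^\ast\omega_{X^{(n)}}^{-1}$ is nef and big. Because the relevant bundle is pulled back along $\pi$, the (anti)canonical model of $X^{[n]}$ factors through $\pi$, which is the first indication that $\sigma$ ought to be compatible with $\pi$.

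The central step, and the step I expect to be the main obstacle, is to show \emph{directly} that $\sigma$ preserves the big diagonal $\Delta\subset X^{[n]}$, for then $\sigma$ descends to an automorphism $\bar\sigma$ of $X^{(n)}$. This is exactly where positivity is indispensable: for K3 surfaces the analogous claim fails, which is precisely the source of the non-natural automorphisms recalled above. Recall that $\Pic(X^{[n]})=\Pic(X)\oplus\BZ\delta$ with $2\delta=[\Delta]$, and that $\sigma^\ast$ preserves $\omega_{X^{[n]}}$ together with the nef and pseudo-effective cones. I would identify $\Delta$ as the unique rigid, contractible prime divisor on $X^{[n]}$, whose class spans an extremal ray singled out by the positivity of $\omega_{X^{[n]}}^{\mp1}$ and collapsed by $\pi$; preservation of this ray forces $\sigma^\ast\delta=\delta$, and since $h^0(X^{[n]},\CO(2\delta))=1$ the divisor $\Delta$ is the only effective member of its class, whence $\sigma(\Delta)=\Delta$. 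When $\omega_X^{-1}$ (resp.~$\omega_X$) is ample this is cleanest, as the (anti)canonical model is $X^{(n)}$ and $\pi$ is its unique crepant divisorial contraction; in the merely nef-and-big case the (anti)canonical model additionally contracts curves lying over the loci where $\omega_X^{\mp1}$ is trivial, and the real work is to verify that these neither perturb the extremal ray of $\delta$ nor introduce a competing rigid divisor.

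It then remains to show that $\Aut(X^{(n)})$ consists only of the diagonal action of $\Aut(X)$, outside the exceptional case. Over the complement of the diagonal, $X^n\to X^{(n)}$ is an \'etale Galois $S_n$-cover, so $\bar\sigma$ lifts to an automorphism of $X^n$ normalising the deck group, extending over all of $X^n$ by normality. Invoking the rigidity of automorphisms of products, $\Aut(X^n)=\Aut(X)\wr S_n$ whenever $X$ is not itself a product of curves; the normaliser of the deck group $S_n$ inside this is $\Aut(X)\times S_n$, which descends to the diagonal action of a single $\phi\in\Aut(X)$, so $\sigma=\phi^{[n]}$ is natural. The exceptional case enters exactly when $X=C_1\times C_2$: then $X^n\cong C_1^n\times C_2^n$ carries the strictly larger group $(\Aut(C_1)\wr S_n)\times(\Aut(C_2)\wr S_n)$, the deck group embeds as the diagonal $S_n\hookrightarrow S_n\times S_n$ permuting the $C_1$- and $C_2$-factors simultaneously, and its normaliser contains pairs $(\tau_1,\tau_2)$ with $\tau_1\tau_2^{-1}$ central in $S_n$. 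For $n\ge 3$ the triviality of $Z(S_n)$ forces $\tau_1=\tau_2$ and only natural automorphisms descend, whereas for $n=2$ the nontrivial centre of $S_2$ permits $\tau_1\ne\tau_2$, producing the unique non-natural automorphism of the statement. Finally, Boissi\`ere's equality $\Aut^0(X^{[n]})=\Aut^0(X)$ reduces the entire argument to the finite group of connected components.
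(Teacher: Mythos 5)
Your proposal follows the same global architecture as the paper (descend to $X^{(n)}$, lift to $X^n$ off the diagonal, extend, classify automorphisms of $X^n$, and use the centre of $\mathrm{S}_n$ to isolate the $n=2$ product case --- this last step matches the paper exactly). However, there are genuine gaps at precisely the two points where the hypotheses ``weak Fano or general type'' rather than ``$\omega_X^{\pm1}$ ample and $X$ simply connected'' must be confronted. For the descent step, your characterisation of the exceptional divisor as ``the unique rigid, contractible prime divisor'' is false once $\omega_X^{\pm1}$ is merely nef and big: if $C\subset X$ is a $(-2)$-curve (allowed in both the weak Fano and the general type case), the divisor of subschemes meeting $C$ is also rigid and is also contracted by the (anti)canonical map of $X^{[n]}$; moreover $\Pic(X^{[n]})=\Pic(X)\oplus\BZ\delta$ fails for irregular surfaces, which include products of curves of genus $\geq 2$ --- the very surfaces the theorem must handle. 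You flag this as ``the real work'' but do not do it. The paper resolves it by descending $\sigma$ to the symmetric power $Y^{(n)}$ of the canonical model $Y$, decomposing the singular locus of $Y^{(n)}$ into the diagonal $\Delta$ and the divisors $D_{y_i}$ over the singular points $y_i\in Y$, and distinguishing $\Delta$ from the $D_{y_i}$ by comparing normalizations ($n\geq 3$) or restrictions of $\omega_{Y^{(2)}}$ ($n=2$); the competing contracted loci in $X^{[n]}$ then have codimension $\geq 2$, so only the Hilbert--Chow exceptional divisor can be a divisorial component of the preimage of $\Delta$, forcing $\sigma$ to preserve it.

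The more serious gap is the lifting step. You assert that since $X^n\setminus D\to X^{(n)}\setminus\Delta$ is a Galois $\mathrm{S}_n$-cover, $\bar\sigma$ ``lifts to an automorphism of $X^n$ normalising the deck group''. Lifting along a covering space is never automatic, Galois or not: by the lifting criterion one needs $(\bar\sigma\circ\rho)_*\pi_1(X^n\setminus D)\subseteq \rho_*\pi_1(X^n\setminus D)$, i.e.\ that $\bar\sigma_*$ preserves the subgroup $G^n\rtimes e$ inside $\pi_1(X^{(n)}\setminus\Delta)\cong G^n\rtimes\mathrm{S}_n$, where $G=\pi_1(X)$. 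This is vacuous only when $G=1$; general type surfaces can have arbitrarily complicated fundamental groups, and an abstract automorphism of $G^n\rtimes\mathrm{S}_n$ need not preserve the normal subgroup $G^n\rtimes e$. This is where the paper does its hardest work: it uses preservation of the small diagonal and a local analysis near it to show that $\bar\sigma_*$ preserves the subgroup $e\rtimes\mathrm{S}_n$ up to the right identifications, and then proves a nontrivial group-theoretic lemma (any automorphism of $G^n\rtimes\mathrm{S}_n$ fixing $e\rtimes\mathrm{S}_n$ preserves $G^n\rtimes e$, with a delicate separate argument for $n=2$). Nothing in your proposal substitutes for this. Two smaller instances of the same pattern: your extension of the lift over the codimension-two locus $D$ needs more than normality (a morphism into a projective variety defined off codimension $2$ need not extend; the paper uses finiteness of $\rho$ and pushforward of sheaf algebras), and $\Aut(X^n)=\Aut(X)\wr\mathrm{S}_n$ is not a general ``rigidity of products'' fact (it fails, e.g., for a simple abelian surface); it requires the positivity hypothesis and is proved in the paper via Krull--Schmidt for $\Omega_{X^n}$ together with uniqueness of product decompositions of canonically polarized varieties.
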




The second result deals with the case left open in the first theorem:

\begin{thm} \label{thm2} Let $C_1, C_2$ be smooth curves, either both rational or both of genus $g \geq 2$. 
Up to composing with natural automorphisms, there exists a unique non-natural automorphism of $(C_1 \times C_2)^{[2]}$.\end{thm}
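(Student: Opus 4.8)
The plan is to realise the non-natural automorphism as a coordinate permutation on $S\times S$ and then to show, via the structure of $\Aut(S\times S)$, that it accounts for the entire discrepancy between $\Aut(S^{[2]})$ and the natural automorphisms. Write $S=C_1\times C_2$ and use the model $S^{[2]}=\mathrm{Bl}_\Delta(S\times S)/\iota$, where $\iota$ is the involution exchanging the two factors of $S\times S$ and $\Delta$ is the diagonal. Labelling the curve-coordinates of $S\times S=C_1\times C_2\times C_1\times C_2$ as $(x_1,x_2,x_3,x_4)$, I would consider the swap $\rho\colon(x_1,x_2,x_3,x_4)\mapsto(x_1,x_4,x_3,x_2)$. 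A direct check gives $\rho^2=\id$, that $\rho$ commutes with $\iota$, and that $\rho$ fixes $\Delta$ pointwise; hence $\rho$ lifts to $\mathrm{Bl}_\Delta(S\times S)$ and descends to an involution $\tau$ of $S^{[2]}$. Concretely $\tau$ sends $\{(a_1,a_2),(b_1,b_2)\}$ to $\{(a_1,b_2),(b_1,a_2)\}$, so $\tau$ is the deck transformation of the generically two-to-one rational map $S^{[2]}\dashrightarrow\Sym^2 C_1\times\Sym^2 C_2$ recording the two projections.

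Next I would check that $\tau$ is non-natural. Since $\rho$ fixes $\Delta$ pointwise, the induced automorphism of the exceptional divisor $E\cong\p(T_S)$ of the Hilbert--Chow morphism covers the identity on the base $\Delta\cong S$; that is, $\tau$ preserves the support of every non-reduced length-two subscheme. A natural automorphism $g^{[2]}$ instead acts on supports through $g$, so $\tau=g^{[2]}$ would force $g=\id_S$ and hence $\tau=\id$, contradicting the fact that $\tau$ interchanges the two distinct configurations above whenever $a_1\ne b_1$ and $a_2\ne b_2$. This applies to every $g\in\Aut(S)$, including the factor swap when $C_1\cong C_2$.

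For uniqueness I would first reduce to a statement on $S\times S$. The key step is to show that every $\sigma\in\Aut(S^{[2]})$ preserves $E$: in the general type case because the Hilbert--Chow contraction is intrinsic to the pluricanonical geometry of $S^{[2]}$, and for the weak Fano surface $S=\p^1\times\p^1$ because $E$ spans the unique divisorial extremal ray of its numerical type. Once $E$ is preserved, $\sigma$ descends to $\Sym^2 S$ and lifts to an $\iota$-equivariant, $\Delta$-preserving automorphism $\tilde\sigma$ of $S\times S$, the natural ones being exactly those of the form $g\times g$. I expect this reduction --- proving preservation of $E$ together with the descent --- to be the main obstacle.

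It then remains to classify $\iota$-equivariant, $\Delta$-preserving automorphisms of $S\times S$ modulo $g\times g$ and modulo $\iota$. For $g\ge 2$ the rigidity of products of curves of general type shows every such automorphism permutes the four factors within their isomorphism types and acts by individual curve automorphisms; imposing $\iota$-equivariance forces the paired curve-automorphism data to coincide (collapsing them to the natural diagonal ones), while the admissible permutations form the centralizer of $\iota$, whose quotient by the natural permutations together with $\langle\iota\rangle$ reduces to $\{\id,[\rho]\}\cong\BZ/2$. For $C_1=C_2=\p^1$ the same conclusion follows from an explicit computation inside $\Aut((\p^1)^{4})=\mathrm{PGL}_2^{4}\rtimes\Sigma_4$, where $\iota$-equivariance and $\Delta$-preservation cut the $\mathrm{PGL}_2$-factors down to the diagonal $\Aut(S)$ and cut $\Sigma_4$ down to the centralizer of $\iota$, whose quotient by the natural Klein four-group is again generated by $[\rho]$. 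In both cases $\Aut(S^{[2]})/\Aut^{\mathrm{nat}}(S^{[2]})\cong\langle\tau\rangle\cong\BZ/2$, which is the assertion.
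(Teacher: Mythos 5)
Your proposal follows the same skeleton as the paper's proof: descend an arbitrary $\sigma\in\Aut(S^{[2]})$ to the symmetric square, lift it to $S\times S$, and classify the resulting automorphisms; then construct the extra involution and check it is not natural. Your existence construction and non-naturality argument are correct, and essentially coincide with the paper's (which descends the automorphism $\id_{C_1^2}\times\alpha$ of Corollary~\ref{Cor_split}(b) to $X^{(2)}$ and lifts it by the universal property of the blow-up $X^{[2]}\to X^{(2)}$). Your final classification of $\iota$-equivariant automorphisms of $C_1^2\times C_2^2$ modulo natural ones also matches the conclusion of Proposition~\ref{Prop_Split_Auto} and Corollary~\ref{Cor_split}, though the paper derives the splitting into factors from Atiyah's Krull--Schmidt theorem applied to $\Omega_{X^n}$, respectively from unique factorization of canonically polarized varieties, rather than from an appeal to ``rigidity''. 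However, there is a genuine gap in the middle of your argument, and you have mislocated the main difficulty.

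The descent step that you flag as the main obstacle is in fact the easy part for the surfaces of Theorem~\ref{thm2}: here $\omega_S^{\pm 1}$ is ample, so $\omega_{S^{[2]}}$ is the pullback of an (anti)ample line bundle from $S^{(2)}$, and Lemma~\ref{thm:DescentLemma} immediately gives the descent of $\sigma$ to $S^{(2)}$ (no extremal-ray or ``intrinsic pluricanonical geometry'' argument is needed). The genuine gap is the single clause ``and lifts to an $\iota$-equivariant, $\Delta$-preserving automorphism $\tilde\sigma$ of $S\times S$''. When both curves have genus $\geq 2$, the surface $S=C_1\times C_2$ is not simply connected, so $S^2\setminus D\to S^{(2)}\setminus\Delta$ is not the universal cover, and the lifting criterion requires proving that $\tau_*$ preserves the index-two subgroup $\pi_1(S)^2\subset\pi_1(S^{(2)}\setminus\Delta)\cong\pi_1(S)^2\rtimes\mathrm{S}_2$. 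This is not formal: in a group of the form $G^2\rtimes \mathrm{S}_2$ the subgroup $G^2$ need not be characteristic --- already for $G=\BZ/2$ the group $G^2\rtimes\mathrm{S}_2$ is the dihedral group of order $8$, whose two Klein four-subgroups are exchanged by an outer automorphism. This is exactly what Proposition~\ref{prop_lifting} and Lemma~\ref{thm:GroupTheoryLemma} are for: one first normalizes $\tau$ by a natural automorphism so that it fixes the small diagonal pointwise, then a local analysis near a point $nx$ shows that $\tau_*$ preserves the subgroup $e\rtimes\mathrm{S}_n$ (up to a base-point change), and only then does the group-theoretic lemma --- whose $n=2$ case is the delicate one, requiring the argument with squares --- yield preservation of $G^n\rtimes e$. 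Even after that, the lift exists a priori only on $S^2\setminus D$ and must be extended over the big diagonal (Proposition~\ref{prop_extension}, or for these surfaces the pluricanonical-ring argument of Section~\ref{section_p2}). For $C_1=C_2=\p^1$ your proposal is fine as stated, since $S^2\setminus D$ is then simply connected; the gap concerns precisely the genus $\geq 2$ case.
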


The non-natural automorphism on $(C_1 \times C_2)^{[2]}$ can be described as follows.
On the complement of the diagonal it sends the cycle $(x_1, y_1) + (x_2,y_2)$ on $C_1 \times C_2$ to the cycle $(x_1, y_2) + (x_2,y_1)$.
Formally, it is defined by lifting the covering involution of
the natural map of symmetric products
\[ (C_1 \times C_2)^{(2)} \to C_1^{(2)} \times C_2^{(2)} \]
to the Hilbert scheme.


Boissi\`ere and Sarti proved that if $X$ is a K3 surface then an automorphism $f \in \Aut(X^{[n]})$
is natural if and only if it preserves the diagonal \cite{BS}.
By a result of Hayashi the same holds if $X$ is an Enriques surface \cite[Theorem 1.2]{Hay}.
This gives some evidence in favour of a positive answer to the following question.
\begin{question}
Suppose $X$ is a smooth projective surface and $\sigma \colon X^{[n]} \overset{\sim}{\to} X^{[n]}$ is an automorphism preserving the diagonal.
Excluding the case $X = C_1 \times C_2$ and $n = 2$, does it follow that $\sigma$ is natural?
\end{question}

For a smooth projective curve $C$ of genus $g$ the Hilbert scheme $C^{[n]}$ is isomorphic to the symmetric product $C^{(n)}$.
In \cite{BG} Biswas and G\'omez show that if $g>2$ and $n>2g-2$ then every automorphism of the $n$-th symmetric product of $C$ is natural.
On the other hand non-natural automorphisms on $(\p^1)^{[n]} \cong \p^n$ for $n \geq 2$ are abundant.

If $X$ is smooth of dimension $\geq 3$ then the Hilbert scheme $X^{[n]}$ is smooth if and only if $n \leq 3$ \cite[Theorem 3.0.1]{MR1616606}. As a first step in understanding the situation
in these cases
we prove the following result.
\begin{thm} \label{Hilb2Pn}
Every automorphism of $(\p^n)^{[2]}$ is natural.
\end{thm}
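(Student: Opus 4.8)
The plan is to use the two natural morphisms out of $(\p^n)^{[2]}$ together with the fact that its Picard rank equals $2$. I will realize $(\p^n)^{[2]}$ as a quotient of a blow-up: writing $Y := \mathrm{Bl}_\Delta(\p^n \times \p^n)$ with exceptional divisor $E$, and $\tau$ for the involution swapping the two factors, one has $(\p^n)^{[2]} = Y/\langle\tau\rangle$, and the quotient map $q\colon Y \to (\p^n)^{[2]}$ is a double cover ramified along $E$ and branched along the divisor $D \subset (\p^n)^{[2]}$ of non-reduced subschemes. Since $\tau$ swaps the two hyperplane pullbacks $H_1, H_2$ and fixes $E$ in $\Pic(Y) = \BZ H_1 \oplus \BZ H_2 \oplus \BZ E$, taking $\tau$-invariants shows that $\Pic((\p^n)^{[2]}) \otimes \BQ$ has rank $2$, spanned by $D$ and the polarization $h$ descending from $\CO_{\p^n}(1)$.

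Next I would pin down the two extremal contractions. The Hilbert--Chow morphism $\rho\colon (\p^n)^{[2]} \to (\p^n)^{(2)}$ is birational and contracts $D$, a $\p^{n-1}$-bundle over the diagonal, onto the diagonal of $(\p^n)^{(2)}$; this is a divisorial contraction of one extremal ray $R_\rho$. Sending a length-$2$ subscheme to the unique line it spans defines a second morphism $\ell\colon (\p^n)^{[2]} \to \mathbb{G}(1,n)$ to the Grassmannian of lines; its fiber over a line $L$ is the space $L^{[2]} = (\p^1)^{[2]} \cong \p^2$ of length-$2$ subschemes of $L$, so $\ell$ exhibits $(\p^n)^{[2]} = \p(\Sym^2 \mathcal S)$ as a $\p^2$-bundle (with $\mathcal S$ the tautological rank-$2$ bundle). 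This is a fiber-type contraction of the second ray $R_\ell$, contracting the lines inside the $\p^2$-fibers. As $\Pic$ has rank $2$, these are the only two extremal contractions.

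Now let $\sigma \in \Aut((\p^n)^{[2]})$. Then $\sigma_*$ preserves the cone of curves and hence permutes the two extremal rays $\{R_\rho, R_\ell\}$. Because $\rho$ is birational while $\ell$ is of fiber type, the two contractions cannot be interchanged; therefore $\sigma$ preserves each, and in particular $\sigma(D) = D$. Since $\Pic$ is torsion-free, $\sigma$ fixes the square root $\delta$ of $\CO(D)$ that defines the double cover $q$, so $\sigma$ lifts to an automorphism $\tilde\sigma$ of $Y$ commuting with the deck involution $\tau$. As $\tilde\sigma$ preserves $E = \mathrm{Fix}(\tau)$, it descends along the blow-down $Y \to \p^n \times \p^n$ to an automorphism $\hat\sigma \in \Aut(\p^n \times \p^n)$ commuting with the swap $\tau$. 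Finally, inside $\Aut(\p^n \times \p^n) = (\mathrm{PGL}_{n+1} \times \mathrm{PGL}_{n+1}) \rtimes \langle\tau\rangle$ the centralizer of $\tau$ consists of the elements $(g,g)$ and $(g,g)\tau$; modulo $\tau$ each induces the natural automorphism $g^{[2]}$. Hence $\sigma$ is natural.

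I expect the main obstacle to be the second and third steps: verifying that $\rho$ and $\ell$ really are the contractions of the two distinct extremal rays, so that no automorphism can swap them, and that the descent data is valid, that is, that $\sigma$ genuinely lifts to $Y$ and that the lift descends through the blow-down to an honest automorphism of $\p^n \times \p^n$ commuting with $\tau$. The cone-preservation argument is what reduces the a priori infinite-dimensional problem to the clean finite computation in $\Aut(\p^n \times \p^n)$.
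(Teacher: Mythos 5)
Your proposal is correct, and its first half coincides with the paper's proof: both compute that $\Pic((\p^n)^{[2]})$ has rank $2$, identify the Hilbert--Chow morphism and the line morphism to $\mathrm{Gr}(2,n+1)$ as the contractions of the two extremal rays, and rule out swapping them because the two contractions are non-isomorphic (you say: one is birational, the other of fiber type), so $\sigma$ preserves both. Where you genuinely diverge is the lifting step. The paper descends $\sigma$ to $(\p^n)^{(2)}$ via Lemma~\ref{thm:DescentLemma} and then reruns the machinery of Section~\ref{section_p2}: lift through the covering space of the complement of the diagonal (using simple connectedness), extend over the codimension-two locus by a section-ring argument, and split the resulting automorphism of $(\p^n)^2$ using the Picard group of the product. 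You instead lift $\sigma$ directly to the ramified double cover $Y=\mathrm{Bl}_{\Delta}(\p^n\times\p^n)$ by cyclic-cover theory (the square root $\delta$ of $\CO(D)$ is unique since $\Pic$ is torsion-free), then blow down and compute the centralizer of the swap. This is more self-contained (no fundamental groups, no extension across codimension two) but is special to Hilbert squares, whereas the paper's route is the template it uses for all $n$ in the main theorems. The two steps you flag do close: the rays cannot be interchanged since the targets of the contractions are non-isomorphic (they even have different dimensions for $n\geq 2$, and only one map is birational); and for the blow-down descent, note that $\tilde\sigma$ fixes $K_Y$ and $E$, hence fixes $K_Y-(n-1)E=\pi^{*}K_{\p^n\times\p^n}$, so $\tilde\sigma^{*}\pi^{*}\CO(1,1)\cong \pi^{*}\CO(1,1)$, and Lemma~\ref{thm:DescentLemma} applied to this nef and big bundle, whose section ring defines $\pi$, produces the automorphism $\hat\sigma\in\Aut(\p^n\times\p^n)$ with $\pi\circ\tilde\sigma=\hat\sigma\circ\pi$. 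One caveat you share with the paper: both arguments implicitly require $n\geq 2$, since $(\p^1)^{[2]}\cong\p^2$ has Picard rank $1$ and abundant non-natural automorphisms.
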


The construction of the non-natural automorphism of $(C_1\times C_2)^{[2]}$ generalizes to products of higher dimensionsional varieties,
but we make no claims regarding the analogue of the uniqueness result of Theorem~\ref{thm2}.

Bondal and Orlov \cite[Theorem 2.5]{BO} proved that any derived equivalence between smooth projective varieties with one of them having ample or anti-ample canonical bundle
is induced by an isomorphism of the underlying varieties. We obtain the following Hilbert scheme analog. 
\begin{cor} \label{Cor1}
Let $X,Y$ be smooth projective surfaces and assume that $Y$ is weak Fano or of general type.
Moreover if $Y$ is a product of curves assume $n \geq 3$.
Then for every isomorphism $\sigma \colon X^{[n]} \overset{\sim}{\to} Y^{[n]}$
there exist an isomorphism $g \colon X \overset{\sim}{\to} Y$ such that $\sigma=g^{[n]}$.
\end{cor}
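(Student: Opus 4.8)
The plan is to reduce the statement to the automorphism case treated in Theorem~\ref{thm1}. Granting for the moment that the underlying surfaces are abstractly isomorphic, the reduction is purely formal. Fix any isomorphism $g_0\colon X\overset{\sim}{\to}Y$; then $g_0^{[n]}$ is an isomorphism $X^{[n]}\overset{\sim}{\to}Y^{[n]}$, and $\tau:=(g_0^{[n]})^{-1}\circ\sigma$ is an automorphism of $X^{[n]}$. Since $X\cong Y$, the surface $X$ is again weak Fano or of general type, and the hypothesis $n\geq 3$ in the product-of-curves case ensures that we are never in the excluded case $(C_1\times C_2)^{[2]}$ of Theorem~\ref{thm1}. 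That theorem therefore gives $\tau=h^{[n]}$ for some $h\in\Aut(X)$, whence $\sigma=(g_0\circ h)^{[n]}$, and we may take $g=g_0\circ h$.

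The real work is to produce the isomorphism $X\cong Y$ from $\sigma$, and for this I would reuse the geometric input of Theorem~\ref{thm1}. The proof of that theorem identifies the boundary divisor $E_Y\subset Y^{[n]}$ --- equivalently, the Hilbert--Chow contraction $\rho_Y\colon Y^{[n]}\to\Sym^n Y$ --- intrinsically in terms of the variety $Y^{[n]}$ alone, using the canonical class together with the birational cone and contraction structure; this is precisely the ingredient that forbids non-natural automorphisms, and it depends only on the target. Applying it to $\sigma$, the distinguished divisor on the abstract variety $X^{[n]}\cong Y^{[n]}$ produced by this characterization is on the one hand $\sigma^{-1}(E_Y)$, and on the other hand, by the uniqueness of the Hilbert--Chow contraction of $X^{[n]}$, equal to $E_X$. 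Hence $\sigma$ carries $\rho_X$-fibers to $\rho_Y$-fibers and descends to an isomorphism $\bar\sigma\colon\Sym^n X\overset{\sim}{\to}\Sym^n Y$.

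It remains to descend $\bar\sigma$ to the surfaces. The symmetric product $\Sym^n X$ carries a canonical stratification by the multiplicity type of a configuration, and the individual strata are intrinsic to $\Sym^n X$, being distinguished by the local analytic type of the quotient singularity. Any isomorphism $\bar\sigma$ preserves this stratification; in particular it must map the deepest stratum of $\Sym^n X$, the two-dimensional small diagonal, which is isomorphic to $X$, onto the small diagonal of $\Sym^n Y\cong Y$. This yields the desired isomorphism $g\colon X\overset{\sim}{\to}Y$.

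The main obstacle is the middle step: one must verify that the argument of Theorem~\ref{thm1} isolating $E_Y$ genuinely uses only that $Y$ is weak Fano or of general type --- and never that $\sigma$ is an automorphism --- so that it transports the boundary across an arbitrary isomorphism, and one must check the matching uniqueness statement on the source $X^{[n]}$, so that the transported divisor is indeed $E_X$ rather than some other contractible class. Once this compatibility of the two Hilbert--Chow structures is in hand, the passage to symmetric products and then, via the intrinsic stratification, to the surfaces is routine, and the reduction to Theorem~\ref{thm1} completes the argument.
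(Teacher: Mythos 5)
Your architecture coincides with the paper's: the paper proves Corollary~\ref{Cor1} by first establishing the intermediate statement that $X^{[n]} \cong Y^{[n]}$ (with $Y$ weak Fano or of general type) forces $X \cong Y$, and then performing exactly your formal reduction --- compose with $(g_0^{[n]})^{-1}$, apply Theorem~\ref{thm1}, note the $n\geq 3$ hypothesis avoids the exceptional case. Your first paragraph and your final descent (symmetric products, then the small diagonal, which is intrinsic as the locus of maximal Zariski tangent space dimension and is isomorphic to the underlying surface) are correct and are what the paper does. The problem is that the step you defer as ``the main obstacle'' is the entire mathematical content of the corollary, and the reason you give for expecting it to be routine --- that the characterization of $E_Y$ ``depends only on the target'' --- is false as stated. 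To have any matching intrinsic characterization of $E_X$ on the source, you must first know that $X$ itself is of general type (resp.\ has big anticanonical class); a priori $X$ is an arbitrary smooth projective surface, and ``uniqueness of the Hilbert--Chow contraction of $X^{[n]}$'' is not a statement you can invoke without knowing what the pluricanonical maps of $X^{[n]}$ look like. The paper bootstraps this from the isomorphism: $\sigma^{\ast}\omega_{Y^{[n]}} \cong \omega_{X^{[n]}}$ identifies the (anti)canonical rings, the Proj of which has dimension $2n$, forcing $\kappa(X)=2$ in the general type case; only then does the diagram $X^{[n]} \to X_{\text{can}}^{(n)}$, $Y^{[n]} \to Y_{\text{can}}^{(n)}$ with an induced isomorphism $\tau$ of canonical models exist.

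Second, even granting this, the key claim is that $\tau \colon X_{\text{can}}^{(n)} \to Y_{\text{can}}^{(n)}$ carries the diagonal $\Delta_{X_{\text{can}}}$ to $\Delta_{Y_{\text{can}}}$ rather than to one of the other irreducible components $D_{y_j} = \{y_j + z\}$ of the singular locus (one for each singular point $y_j$ of $Y_{\text{can}}$). The automorphism-case argument of Proposition~\ref{Prop_Sym2} does not transport verbatim: there one rules out $\Delta \mapsto D_{y}$ by comparing normalizations inside a single variety ($Y^{(n-1)} \not\cong Y \times Y^{(n-2)}$ for $n \geq 3$), whereas across two different surfaces one must compare $X_{\text{can}} \times X_{\text{can}}^{(n-2)}$ with $Y_{\text{can}}^{(n-1)}$, which cannot be immediately excluded when $X \neq Y$; the paper handles this with normality of $D_y$ versus non-normality of $\Delta$, or with a Betti-number argument, and for $n=2$ (where all components are surfaces) by first deducing $X_{\text{can}} \cong Y_{\text{can}}$ and then running the restriction-of-$\omega$ comparison. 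These are short but genuinely new arguments, not re-readings of the proof of Theorem~\ref{thm1}, so your proposal as written has a gap precisely where the corollary requires work beyond the theorem. Once that claim is supplied, the rest of your plan (preservation of the fiber class, descent via the universal property as in Proposition~\ref{Prop_Sym1}, and the small-diagonal identification) goes through as you describe.
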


After a first version of this paper appeared online, Hayashi
made us aware of the preprint \cite{hayashi}
in which he proves 
Theorems~\ref{thm1} and~\ref{thm2} and Corollary~\ref{Cor1}
for rational surfaces such that the Iitaka dimension of $\omega_X^{-1}$ is at least $1$.
Hayashi's arguments do not apply to
surfaces with non-trivial fundamental group or in general type,
while our arguments do not apply
in Iitaka dimension $1$. For simply connected surfaces with $\omega_X^{ \pm 1}$ ample
the arguments are parallel, see Section~\ref{section_p2} for an outline of that case.

An interesting question beyond the scope of this paper is to describe
the group of derived auto-equivalences of the Hilbert scheme of points of a smooth projective surface.
In the weak Fano or general type case our results determine the group
of \emph{standard} auto-equivalences, that is the group generated by automorphisms of the variety, tensoring with line bundles, and shifts.
But by a result of Krug \cite[Theorem 1.1(ii)]{Krug}\footnote{The numbering refers to the (non yet publicly available) published version of the paper \cite{Krug}.
The corresponding results in the arXiv version are Theorem 1C and Conjecture~5.14.} there always exist non-standard auto-equivalences on the Hilbert scheme.
For Hilbert squares and Hilbert cubes of surfaces with ample or anti-ample canonical bundle
a proof of \cite[Conjecture 7.5]{Krug} combined with Theorems \ref{thm1} and \ref{thm2}
would give a full description of the derived auto-equivalence group.


Hilbert schemes of points of Fano surfaces admit deformations which
may be understood as Hilbert schemes of non-commu\-tative deformations of Fano surfaces \cite{Li}.
It would be interesting to compare the automorphism groups of these deformations with the automorphism groups of the underlying non-commutative surfaces which were computed in \cite{BHH}, and see whether they are all natural in the appropriate sense.

\subsection{Acknowledgements}
Chiara Camere was part of the collaboration at an earlier stage and we are very grateful for her insights and input.
We would also like to thank Arnaud Beauville, Alberto Cattaneo, Daniel Huybrechts, Gebhard Martin and John Christian Ottem for their interest and useful discussions.
We thank Taro Hayashi for useful comments and for sending us the preprint \cite{hayashi}. 
The project originated at the Max Planck Institute for Mathematics in Bonn and we thank the institute for support.

\section{Preliminaries} \label{section_preliminaries}
Let $X$ be a smooth complex projective surface.
Let $X^{(n)}$ be the $n$-th symmetric product of $X$
obtained as the quotient of the cartesian product $X^n$ under the permutation action by the symmetric group $\mathrm{S}_n$.
Let $\rho\colon X^n \to X^{(n)}$ be the quotient map and let $p_i\colon X^n \to X$ be the projection onto the $i$-th factor.
Recall also the Hilbert--Chow morphism
\[ \epsilon \colon X^{[n]} \to X^{(n)} \]
which sends a subscheme $Z \subset X$ to its support.
The notation is summarized in the following diagram:
\[
\begin{tikzcd}
X^{[n]} \ar{dr}{\epsilon} & X^n \ar{r}{p_i} \ar{d}{\rho} & X \\
& X^{(n)} &
\end{tikzcd}
\]

For any line bundle $\CL$ on $X$ the tensor product $\bigotimes_{i=1}^n p_i^{\ast} \CL$
has a natural $\mathrm{S}_n$-invariant structure, and taking $\mathrm{S}_n$-invariants defines a line bundle $\CL_{(n)}$ on $X^{(n)}$.
If $\CL$ is (very) ample, then $\CL_{(n)}$ is (very) ample as well.
We also define the pullback to the Hilbert scheme:
\[ \CL_{[n]} \coloneqq \epsilon^{\ast} \CL_{(n)} \]
By arguments parallel to \cite[\S6]{Bea}, the canonical bundle of $X^{[n]}$ is
\[ \omega_{X^{[n]}} = (\omega_{X})_{[n]}. \]

The symmetric product $X^{(n)}$ is singular precisely at the diagonal $\Delta$ of cycles
supported at less than $n$ points.
By \cite[Theorem~18.18]{MS} the tangent space at $n x \in X^{(n)}$ for any $x \in X$ satisfies
\[ \dim \mathrm{T}_{X^{(n)}, nx} = \frac{n(n + 3)}{2}. \]
This shows that the small diagonal $\Delta_{\text{small}} = \{ n x \mid x \in X \}$ is distinguished in the symmetric product as the locus of points in $X^{(n)}$
where the Zariski tangent space is of maximal dimension.

%

For future use, we record the following lemma.
\begin{lemma}
\label{thm:DescentLemma}
Let $f \colon X \to Y$ be a morphism of projective varieties, where $Y$ is normal and $f$ has connected fibres.
Let $\mathcal{L}$ be an ample line bundle on $Y$.
For any automorphism $\sigma \colon X \to X$ such that $\sigma^* f^*\mathcal{L} \cong f^*\mathcal{L}$ there exists an isomorphism $\tau \colon Y \overset{\sim}{\to} Y$ such that $\tau \circ f = f \circ \sigma$.
\end{lemma}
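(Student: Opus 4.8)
The plan is to reconstruct $Y$, together with the morphism $f$, from the pair $(\mathcal{L}, f^*\mathcal{L})$, and then to read off $\tau$ from the induced action of $\sigma$ on a homogeneous coordinate ring. First I would establish that $f_*\mathcal{O}_X = \mathcal{O}_Y$. As $X$ and $Y$ are projective, $f$ is proper; being proper with connected (hence nonempty) fibres it is surjective. Its Stein factorization $X \to Y' \to Y$, with $Y' = \operatorname{Spec}_Y f_*\mathcal{O}_X \to Y$ finite and $X \to Y'$ having connected fibres, then has the property that every fibre of $Y' \to Y$ is a single point, so $Y' \to Y$ is finite and bijective; in characteristic $0$ it is therefore birational, and since $Y$ is normal, Zariski's main theorem forces it to be an isomorphism. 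Hence $f_*\mathcal{O}_X = \mathcal{O}_Y$, and the projection formula gives canonical isomorphisms
\[ H^0(X, (f^*\mathcal{L})^{\otimes m}) \cong H^0(Y, \mathcal{L}^{\otimes m} \otimes f_*\mathcal{O}_X) = H^0(Y, \mathcal{L}^{\otimes m}) \]
for all $m \geq 0$. Thus the section ring $R \coloneqq \bigoplus_{m \geq 0} H^0(Y, \mathcal{L}^{\otimes m})$ of $\mathcal{L}$ is identified with that of $f^*\mathcal{L}$, and since $\mathcal{L}$ is ample, $Y \cong \operatorname{Proj} R$.

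Next I would convert $\sigma$ into a graded automorphism of $R$. Fix an isomorphism $\phi \colon \sigma^* f^*\mathcal{L} \overset{\sim}{\to} f^*\mathcal{L}$. For each $m$, composing pullback along $\sigma$ with $\phi^{\otimes m}$ gives a linear automorphism of $H^0(X, (f^*\mathcal{L})^{\otimes m}) = R_m$, and these assemble into a graded ring automorphism $s$ of $R$. Applying $\operatorname{Proj}$ yields an automorphism $\tau \coloneqq \operatorname{Proj}(s)$ of $Y$. Since $H^0(X, \mathcal{O}_X)$ consists only of scalars, any two choices of $\phi$ differ by a constant, which acts trivially on $\operatorname{Proj} R$; hence $\tau$ is in fact independent of $\phi$.

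Finally I would verify the relation $\tau \circ f = f \circ \sigma$. Choosing $m$ with $\mathcal{L}^{\otimes m}$ very ample embeds $Y$ into $\mathbb{P}(R_m^{\vee})$, and under the identification above $f$ becomes the morphism $X \to \mathbb{P}(R_m^{\vee})$ defined by the complete linear system $|(f^*\mathcal{L})^{\otimes m}|$, whose value at a closed point $x$ records the evaluation functional $v \mapsto v(x)$ on $R_m$. Unwinding the definition of $s$ shows that evaluating a section at $\sigma(x)$ agrees, up to the global scalar coming from $\phi$, with first applying the degree-$m$ part of $s$ and then evaluating at $x$; this is precisely the statement that $f(\sigma(x)) = \tau(f(x))$ on closed points. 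Since $X$ is reduced, equality on closed points gives equality of the two morphisms $X \to Y$.

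The conceptual heart is the identity $f_*\mathcal{O}_X = \mathcal{O}_Y$, which is what makes the two section rings coincide and hence lets the action on $X$ descend; the rest is functoriality of $\operatorname{Proj}$ and of the evaluation map. I expect the \emph{main obstacle} to be the bookkeeping in the last step: tracking the isomorphism $\phi$ and its scalar ambiguity, and confirming that the linear automorphism $s|_{R_m}$ induces exactly the same $\tau$ as the $\operatorname{Proj}$ construction, so that the intertwining relation holds on the nose rather than merely up to a further automorphism.
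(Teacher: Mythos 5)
Your proof is correct and follows essentially the same route as the paper's: use Stein factorization and normality of $Y$ to get $f_*\mathcal{O}_X = \mathcal{O}_Y$, identify the section rings of $\mathcal{L}$ and $f^*\mathcal{L}$, and apply $\operatorname{Proj}$ to the induced graded ring automorphism to obtain $\tau$. The paper's proof is a three-line sketch; you have simply filled in the details it leaves implicit (the bijectivity/Zariski argument for the Stein factor, the scalar ambiguity of $\phi$, and the pointwise verification that $\tau \circ f = f \circ \sigma$), all correctly.
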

\begin{proof}
By Stein factorization and our assumptions we have $f_*(\CO_X) = \CO_Y$, and so $\mathrm{H}^0(X, f^*\mathcal{L}^{\otimes m}) = \mathrm{H}^0(Y, \mathcal{L}^{\otimes m})$ for all $m\geq 0$. Applying the Proj construction to the corresponding graded algebra gives the isomorphism~$\tau$, and by construction~$\tau\circ f=f\circ\sigma$.
\end{proof}

\section{The basic strategy} \label{section_p2}
We first explain the proof of Theorem~\ref{thm1} 
under the assumption that
\begin{itemize}
\item $\omega_X$ or $\omega_X^{-1}$ is ample,
\item $X$ is simply connected,
\item $X$ is not a product of curves.
\end{itemize}
Let
$\sigma \colon X^{[n]} \overset{\sim}{\to} X^{[n]}$
be an automorphism.
Since the differential of $\sigma$ is everywhere invertible we have
\[ \sigma^{\ast} \omega_{X^{[n]}} \cong \omega_{X^{[n]}}. \]

\vspace{1pt}
\noindent \emph{Step 1.} (Reduction to the symmetric product)
Because 
$\omega_{X^{[n]}}$ is
the pullback of an ample or anti-ample line bundle from the symmetric product $X^{(n)}$, by Lemma \ref{thm:DescentLemma} there exists an automorphism $\tau \colon X^{(n)} \to X^{(n)}$ which makes the following diagram commute:
\[
\begin{tikzcd}
X^{[n]} \ar{d}{\epsilon} \ar{r}{\sigma} & X^{[n]}\ar{d}{\epsilon} \\
X^{(n)} \ar{r}{\tau} &  X^{(n)}.
\end{tikzcd}
\]
Since $\epsilon$ is birational, $\tau$ is the identity if and only if $\sigma$ is the identity.
We are hence reduced to studying automorphisms of the symmetric product.

\vspace{5pt}
\noindent \emph{Step 2.} (Lifting)
Since $\tau$ preserves the singular points the diagonal on the symmetric product is preserved:
\[ \tau(\Delta) = \Delta. \]
Let $D \subset X^n$ denote the big diagonal in $X^n$ and consider the restriction of the quotient map
\[ \rho_D \colon X^n \setminus D \, \to \, X^{(n)} \setminus \Delta. \]
Since $X$ is assumed to be simply connected and $D \subset X^n$ is of codimension $2$ in a smooth ambient space,
$X^n \setminus D$ is also simply connected.
Hence $\rho_D$ is the universal covering space of $X^{(n)} \setminus \Delta$.
Applying the universal lifting property to the morphism $\tau \circ \rho_D$ we obtain an
automorphism $f \in \Aut( X^n \setminus D)$ which makes the following diagram commute:
\[
\begin{tikzcd}
X^n \setminus D \ar{d}{\rho} \ar{r}{f} & X^n \setminus D \ar{d}{\rho} \\
X^{(n)} \setminus \Delta \ar{r}{\tau} &  X^{(n)} \setminus \Delta.
\end{tikzcd}
\]

\vspace{5pt}
\noindent\emph{Step 3.} (Extension to $X^n$)
Since $D$ is of codimension $2$ in $X^n$ and since $X^n$ is smooth hence normal,
every section of a line bundle on the complement of $D$ extends.
Applying this to $\omega_{X^n}$ and its powers, the automorphism $f$ induces a graded ring automorphism of
\[ \bigoplus_{m} \mathrm{H}^0(X^n, \omega_{X^n}^{\otimes m}). \]
Since $\omega_X$ is ample or anti-ample, this in turn induces an automorphism of $X^n$ that extends $f$.
We will denote this extension by $f$ as well. We have constructed an automorphism
\[ f \colon X^n \overset{\sim}{\to} X^n \]
such that $\rho \circ f = \tau \circ \rho$.

\vspace{5pt}
\noindent\emph{Step 4.} (Splitting the automorphism)
Let $\CL$ be a very ample line bundle on $X$ and let
\[ \CL_i \coloneqq p_i^{\ast} \CL \]
be its pullback to $X^n$ along the projection to the $i$-th factor.
The projection $p_i$ is hence the morphism associated to the complete linear system of $\CL_i$.
We following the arguments of \cite[Theorem~4.1]{Oguiso} and consider $f^{\ast} \CL_i$.

Since $X$ is simply connected, we have $\mathrm{H}^1(X,\BZ) = 0$. On the one hand
this implies that the $\Pic^0(X^n) = \Pic^0(X)^n = 0$.
On the other hand, we have $\mathrm{H}^2(X^n, \BZ) = \mathrm{H}^2(X,\BZ)^{\oplus n}$ and hence
\[ \Pic(X^n) \cong \mathrm{H}^{1,1}(X^n, \BZ) \cong \mathrm{H}^{1,1}(X,\BZ)^{\oplus n} \cong \Pic(X)^{\oplus n}. \]

We conclude that there exist line bundles $\CM_j$ on $X$ such that
\[ f^{\ast} \CL_i \cong p_1^{\ast} \CM_1 \otimes \cdots \otimes p_n^{\ast} \CM_n. \]
This implies that
$p_i \circ f = h \circ (g_1 \times \cdots \times g_n)$
for some morphisms $g_j \colon X \to Y_j$ and some isomorphism $h \colon \prod_i Y_i \xrightarrow{\sim} X$.
Since $X$ is not a product of curves, one of the $g_j$ is an isomorphism and the others are projections to a point.
Hence
$p_i \circ f$ only depends on the corresponding $j(i)$-th component.
After composing $f$ with the permutation that sends $i$ to $j(i)$ we therefore get
\[ f = f_1 \times \cdots \times f_n \]
for some $f_i \in \Aut (X)$.

The automorphism $\tau \in \Aut (X^{(n)})$ preserves the small diagonal as the locus where the tangent space has maximal dimension.
It follows that $f$ preserves the small diagonal in $X^n$,
so all the $f_i$ are the same, and hence that $\sigma$ is natural. 

\section{The general case}
We present the proof of the main theorem.
We proceed as in Section~\ref{section_p2} but at every step we need to find an
argument that works for weak Fano surfaces and for surfaces of general type. 
Our assumption throughout is that $X$ is a smooth complex projective surface.

For a weak Fano surface $X$ we will use that $\omega_X^{-1}$ is semiample, that is a power of it is basepoint free.
Moreover, the morphism defined by the linear system of $\omega_X^{- \otimes m}$ is birational for appropriate $m \gg 0$,
see \cite[Theorem~4.3]{Sakai}.

\subsection{Reduction to symmetric product}
We begin by giving a criterion for when an automorphism of the Hilbert scheme descends to the symmetric product.
Let $\alpha \in \mathrm{H}_2(X^{[n]}, \BZ)$ be the class of a $\p^1$-fiber of the map $\epsilon \colon X^{[n]} \to X^{(n)}$.
In particular, $\alpha$ is the unique primitive curve class such that
\[ \epsilon_{\ast}\alpha = 0. \]

\begin{prop} \label{Prop_Sym1}
Let $\sigma \in \Aut (X^{[n]})$. If
$\sigma_{\ast} \alpha = \alpha$
then there exists an automorphism $\tau \in \Aut(X^{(n)})$ such that the following diagram commutes:
\[
\begin{tikzcd}
X^{[n]} \ar{d}{\epsilon} \ar{r}{\sigma} & X^{[n]}\ar{d}{\epsilon} \\
X^{(n)} \ar{r}{\tau} &  X^{(n)}.
\end{tikzcd}
\]
\end{prop}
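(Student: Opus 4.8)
The goal is: given $\sigma \in \Aut(X^{[n]})$ with $\sigma_*\alpha = \alpha$, produce $\tau \in \Aut(X^{(n)})$ commuting with $\epsilon$. The natural tool is the Descent Lemma (Lemma \ref{thm:DescentLemma}): the Hilbert–Chow morphism $\epsilon$ has connected fibers and $X^{(n)}$ is normal, so it suffices to exhibit an ample line bundle $\mathcal{L}$ on $X^{(n)}$ with $\sigma^* \epsilon^* \mathcal{L} \cong \epsilon^* \mathcal{L}$. Thus the whole proposition reduces to finding a $\sigma$-invariant line bundle of the form $\mathcal{L}_{[n]} = \epsilon^* \mathcal{L}_{(n)}$ for some ample $\mathcal{L}_{(n)}$ on $X^{(n)}$.

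The plan is to characterize such pullback classes numerically using the curve class $\alpha$. Let me fix a very ample $\mathcal{L}$ on $X$, giving the ample $\mathcal{L}_{(n)}$ on $X^{(n)}$ and its pullback $\mathcal{L}_{[n]}$ on $X^{[n]}$. The key fact is that $\mathcal{L}_{[n]}$ is exactly the pullback from the symmetric product, so $\mathcal{L}_{[n]} \cdot \alpha = 0$ since $\epsilon_*\alpha = 0$ (projection formula). I would first argue that the subgroup of $\mathrm{NS}(X^{[n]})$ consisting of classes pulled back along $\epsilon$ is precisely $\alpha^\perp$ intersected with the relevant lattice — using that $\mathrm{Pic}(X^{[n]}) \cong \mathrm{Pic}(X) \oplus \mathbb{Z}\delta$, where $2\delta$ is the class of the exceptional divisor of $\epsilon$, and that the pullbacks $\mathcal{L}_{[n]}$ are exactly the classes with zero $\delta$-component (equivalently, those pairing trivially with $\alpha$, since $\delta \cdot \alpha \neq 0$ while $\mathcal{L}_{[n]} \cdot \alpha = 0$). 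Now because $\sigma_*\alpha = \alpha$ by hypothesis, $\sigma^*$ preserves the orthogonal complement $\alpha^\perp$: for any class $\beta$ with $\beta \cdot \alpha = 0$ we get $\sigma^*\beta \cdot \alpha = \sigma^*\beta \cdot \sigma_*\alpha = \beta \cdot \alpha = 0$. Hence $\sigma^* \mathcal{L}_{[n]}$ again lies in the subgroup of $\epsilon$-pullbacks, i.e.\ $\sigma^* \mathcal{L}_{[n]} \cong \mathcal{M}_{[n]}$ for some line bundle $\mathcal{M}_{(n)}$ on $X^{(n)}$.

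This does not yet give $\sigma^*\mathcal{L}_{[n]} \cong \mathcal{L}_{[n]}$ on the nose, only that it is \emph{some} pullback. To complete the argument I would use that $\mathcal{M}_{(n)}$ is automatically ample: it is the descent of $\sigma^*\mathcal{L}_{[n]}$, which is ample and big on the resolution, and being a pullback it is numerically a pullback of a big nef class, whose descent to the normal variety $X^{(n)}$ is ample by the projection formula and Nakai–Moishezon applied on $X^{(n)}$. Then applying the Descent Lemma to $\epsilon$ with the ample bundle $\mathcal{M}_{(n)}$ and the relation $\sigma^* \epsilon^* \mathcal{M}_{(n)} = \sigma^*(\sigma^*\mathcal{L}_{[n]})^{\vee\vee}$… more cleanly: I would instead apply the Descent Lemma directly to the ample $\mathcal{L}_{(n)}$ after observing $\sigma^*\mathcal{L}_{[n]} \cong \mathcal{M}_{[n]}$ means $\sigma$ maps $\epsilon$-fibers to $\epsilon$-fibers (as $\sigma^*$ of a pullback is a pullback, the composite $\epsilon \circ \sigma$ contracts the same curves as $\epsilon$), whence $\epsilon \circ \sigma$ factors through $\epsilon$ by the universal property of the contraction, yielding the desired $\tau$.

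\textbf{Main obstacle.} The delicate point is the identification of pullback classes from $X^{(n)}$ with $\alpha^\perp$: one must verify that $\alpha$ really detects the $\delta$-component, i.e.\ that $\delta \cdot \alpha \neq 0$, and that $\alpha$ is primitive and uniquely characterized by $\epsilon_*\alpha = 0$ (both asserted just before the proposition). The cleanest route may well bypass line bundles entirely: since $\alpha$ generates the numerical class of the contracted curves and $\sigma_*\alpha = \alpha$, the automorphism $\sigma$ preserves the fibration structure of $\epsilon$, and one concludes by the rigidity/universal property of the Hilbert–Chow contraction that $\epsilon \circ \sigma = \tau \circ \epsilon$ for a unique $\tau \in \Aut(X^{(n)})$. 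I expect verifying that $\sigma$ genuinely preserves every $\epsilon$-fiber — not merely their numerical class — to require the line-bundle descent argument above, and that is where the hypothesis $\sigma_*\alpha=\alpha$ does its essential work.
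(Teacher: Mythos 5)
Your final sketch --- use $\sigma_*\alpha=\alpha$ to see that $\epsilon\circ\sigma$ contracts the same curves as $\epsilon$, then factor $\epsilon\circ\sigma$ through $\epsilon$ by a universal property of the Hilbert--Chow contraction --- is in fact the paper's strategy, so the overall direction is sound. But as written the proposal has genuine gaps, and the part you elaborate most is flawed. First, the line-bundle route contains a false claim: $\sigma^*\mathcal{L}_{[n]}$ is \emph{not} ``ample and big on the resolution''; the bundle $\mathcal{L}_{[n]}=\epsilon^*\mathcal{L}_{(n)}$ has degree zero on the $\epsilon$-fibers, so it is only nef (semiample) and big, and your proposed deduction that the descended bundle $\mathcal{M}_{(n)}$ is ample collapses as stated. (It can be rescued by Nakai--Moishezon, but only by re-using $\sigma_*\alpha=\alpha$ to rule out that $\sigma$ maps a subvariety not contracted by $\epsilon$ into one that is --- an argument you do not give.) Second, your identification of the $\epsilon$-pullbacks with $\alpha^{\perp}$ rests on $\Pic(X^{[n]})\cong\Pic(X)\oplus\mathbb{Z}\delta$, which holds for regular surfaces ($q(X)=0$) but the proposition is stated for an arbitrary smooth projective surface; and in any case passing from ``numerically orthogonal to $\alpha$'' to ``is an honest pullback along $\epsilon$'' requires a descent argument for line bundles trivial on the fibers, which you do not supply.

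Third --- and this is the key point --- your closing worry is misplaced, and it causes you to hang the clean route on the flawed one. Showing that $\epsilon\circ\sigma$ collapses every $\epsilon$-fiber needs no line bundles at all: the kernel of $\epsilon_*\colon \mathrm{H}_2(X^{[n]},\mathbb{Q})\to \mathrm{H}_2(X^{(n)},\mathbb{Q})$ is one-dimensional, spanned by $\alpha$, so every curve lying in a fiber of $\epsilon$ has class a rational multiple of $\alpha$ and is therefore contracted by $\epsilon\circ\sigma$, since $(\epsilon\circ\sigma)_*\alpha=\epsilon_*\sigma_*\alpha=\epsilon_*\alpha=0$; connectedness of the fibers then gives that $\epsilon\circ\sigma$ is constant on each of them. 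What remains is exactly the content you cite as a black box (``rigidity/universal property of the contraction''), and this is what the paper actually proves: it shows $\epsilon$ is initial among morphisms $f\colon X^{[n]}\to Z$ with $f_*\alpha=0$, by forming $Y=(\epsilon\times f)(X^{[n]})\subset X^{(n)}\times Z$ and using Zariski's main theorem together with the same one-dimensionality of $\ker\epsilon_*$ to show the projection $Y\to X^{(n)}$ is an isomorphism. Finally, you assert $\tau\in\Aut(X^{(n)})$ but only ever produce a morphism; the paper obtains invertibility by noting that $\epsilon\circ\sigma$ is also initial (equivalently, apply the factorization to $\sigma^{-1}$ and use that $\epsilon$ is surjective and birational). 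A complete write-up along your lines would need to fill in precisely these points.
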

\begin{proof}
We first show that the morphism $\epsilon \colon X^{[n]} \to X^{(n)}$ is the initial object in the category of morphisms $f \colon X^{[n]} \to Z$, where $Z$ is a projective scheme and $f$ contracts the $\p^1$-fibers of $\epsilon$ (or equivalently, $f_{\ast} \alpha  = 0$).

Indeed, let $f \colon X^{[n]} \to Z$ be such a morphism and consider the scheme
\[ Y = (\epsilon \times f)(X^{[n]}) \subset X^{(n)} \times Z. \]
We claim the projection to the first factor $p \colon Y \to X^{(n)}$ is an isomorphism.
Since $X^{(n)}$ is normal (as the quotient of the normal space $X^n$ by a finite group) and $p$ is birational and proper,
by Zariski's main theorem it suffices to show that $p$ is finite.
If $p$ is not finite, it contracts a curve $\Sigma$.
Then there exists a curve $\Sigma' \subset X^{[n]}$ such that its image under $\epsilon \times f$ is $\Sigma$
(for example, take the preimage of $\Sigma$ and if that is of dimension $>1$ cut it down by sections of a relatively ample class).
Since by assumption we have $(\epsilon \times f)_{\ast} \alpha = 0$, the class of $\Sigma'$ is linearly independent (over $\BQ$) of $\alpha$ and
contracted by $\epsilon = p \circ (\epsilon \times f)$.
But the kernel of
\[ \epsilon_{\ast} \colon \mathrm{H}_2(X^{[n]}, \BQ) \to \mathrm{H}_2(X^{(n)}, \BQ) \]
is $1$-dimensional and spanned by $\alpha$ which gives a contradiction.
We conclude that $p$ is an isomorphism and hence that $Y$ is the graph of a morphism $g \colon X^{(n)} \to Z$ with $g \circ \epsilon = f$.
Since $\epsilon$ is birational, $g$ is unique.

Applying the above universal property of $\epsilon$ to $\epsilon \circ \sigma$
we obtain a morphism $\tau \colon X^{(n)} \to X^{(n)}$ such that $\epsilon \circ \sigma = \tau \circ \epsilon$.
On the other hand, the same argument also implies that $\epsilon \circ \sigma$ is initial, so $\tau$ is an isomorphism.
\end{proof}

We apply our criterion to the case at hand:

\begin{prop} \label{Prop_Sym2}
Let $X$ be a smooth projective surface which is weak Fano or of general type. Then for every automorphism $\sigma \in \Aut(X^{[n]})$
we have $\sigma_{\ast} \alpha = \alpha$.
\end{prop}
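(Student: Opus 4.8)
The plan is to use that an automorphism preserves the canonical class, $\sigma^{\ast}\omega_{X^{[n]}}\cong\omega_{X^{[n]}}$, together with the positivity of $\pm\omega_X$, and then to pin down the Hilbert--Chow fibre class $\alpha$ by a lattice-theoretic argument. First I would record the soft consequences. Since $\sigma$ is an automorphism, $\sigma_{\ast}\alpha$ is a primitive class (as $\sigma_{\ast}$ is an automorphism of the lattice $\mathrm{H}_2(X^{[n]},\BZ)$ and $\alpha$ is primitive) and it is \emph{effective}: it is the class of the irreducible rational curve $\sigma(\ell)$, where $\ell$ is a $\p^1$-fibre of $\epsilon$. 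Moreover, by the projection formula and $\sigma^{\ast}\omega_{X^{[n]}}\cong\omega_{X^{[n]}}$,
\[ \omega_{X^{[n]}}\cdot\sigma_{\ast}\alpha=\sigma^{\ast}\omega_{X^{[n]}}\cdot\alpha=\omega_{X^{[n]}}\cdot\alpha=0, \]
so $\sigma_{\ast}\alpha$ is again $\omega_{X^{[n]}}$-trivial. In the weak Fano case $-\omega_X$ is nef and big, hence so is $(-\omega_X)_{(n)}$ on $X^{(n)}$, and $-\omega_{X^{[n]}}=\epsilon^{\ast}(-\omega_X)_{(n)}$. Pushing forward, $B\coloneqq\epsilon_{\ast}\sigma_{\ast}\alpha$ is an effective class on $X^{(n)}$ with $(-\omega_X)\cdot B=0$; since $-\omega_X$ is nef and big, $B$ is supported on the finitely many $(-2)$-curves contracted by the anticanonical morphism $X\to\overline X$, which span a negative-definite lattice. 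If $B=0$ then $\sigma_{\ast}\alpha\in\BQ\alpha$, and primitivity together with effectivity forces $\sigma_{\ast}\alpha=\alpha$. So the whole point is to prove $B=0$.

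The distinguishing step is where the integral structure enters. Suppose $B\neq 0$. Writing $B=\sum a_i\Gamma_i$ with $a_i\ge 0$ and $B^2<0$, there is an irreducible $(-2)$-curve $\Gamma\coloneqq\Gamma_i$ with $\Gamma\cdot B<0$. Let $D_\Gamma\subset X^{[n]}$ be the (prime) incidence divisor $\{Z:\operatorname{Supp}(Z)\cap\Gamma\neq\emptyset\}$, whose class is $\Gamma_{[n]}=\epsilon^{\ast}\Gamma_{(n)}$; then $D_\Gamma\cdot\sigma_{\ast}\alpha=\Gamma\cdot B<0$. On the other hand, the exceptional divisor $E$ of $\epsilon$ (with $[E]=2\delta$, where $\delta\cdot\alpha=-1$) is the \emph{unique} prime divisor with $E\cdot\alpha<0$: any prime divisor meeting $\alpha$ negatively must contain every fibre $\ell$, and these sweep out $E$. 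Transporting this uniqueness by the automorphism $\sigma$, the divisor $\sigma(E)$ is the unique prime divisor meeting $\sigma_{\ast}\alpha$ negatively, whence $\sigma(E)=D_\Gamma$. Comparing classes gives $\Gamma_{[n]}=[D_\Gamma]=\sigma_{\ast}[E]=2\,\sigma_{\ast}\delta$, which is $2$-divisible in $\mathrm{H}^2(X^{[n]},\BZ)$ because $\sigma_{\ast}\delta$ is integral. But $\Gamma_{[n]}$ has trivial $\delta$-component, so it is $2$-divisible only if $[\Gamma]\in\mathrm{H}^2(X,\BZ)$ is, which is impossible since $\Gamma^2=-2\not\equiv 0\pmod 4$. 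This contradiction yields $B=0$ and hence $\sigma_{\ast}\alpha=\alpha$.

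For the general type case I would run the same argument after reducing $\omega_{X^{[n]}}$-trivial curves to $(-2)$-curves via the canonical model: the canonical ring of $X^{[n]}$ is $\sigma$-stable and has $\operatorname{Proj}$ equal to $(X_{\mathrm{can}})^{(n)}$, so $\sigma$ acts compatibly there; the $(-1)$-curves of $X$ contribute $\omega_X$-\emph{negative} classes and are therefore excluded by $\omega_{X^{[n]}}$-triviality of $\sigma_{\ast}\alpha$, leaving only the $(-2)$-curves of the minimal/canonical model, to which the $2$-divisibility obstruction applies verbatim. I expect the main obstacle to be precisely the ruling-out of the $(-2)$-curve classes: numerically the pair $(\alpha,E)$ and a pair $(\text{$(-2)$-incidence class},\,D_\Gamma)$ behave symmetrically (each curve meets its divisor in $-2$ and is $\omega_{X^{[n]}}$-trivial), so neither the intersection pairing nor the canonical class alone can separate them. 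The separation comes only from the finer \emph{integral} input that $[E]=2\delta$ is divisible by $2$ while $\Gamma_{[n]}$ is not — a manifestation of the fact that the bigness of $\pm\omega_X$ rigidifies the birational geometry, in contrast to the trivial-canonical K3 case where such classes genuinely do get permuted. The subsidiary care needed is the clean verification that $E$ is the unique prime divisor negative on $\alpha$ and the bookkeeping of the non-minimal general type reduction.
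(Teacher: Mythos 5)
Your strategy is genuinely different from the paper's. The paper descends $\sigma$ to an automorphism $\tau$ of $Y^{(n)}$, where $Y$ is the (anti)canonical model, shows that $\tau$ preserves the diagonal by comparing the irreducible components of the singular locus of $Y^{(n)}$ (normalizations for $n\geq 3$, restriction of $\omega_{Y^{(2)}}$ for $n=2$), and then gets $\sigma(E)=E$ because $E$ is the unique divisorial component of the preimage of the diagonal; you instead pin down $\alpha$ by a purely numerical/lattice argument. Your weak Fano half is essentially correct: there $-\omega_X$ is nef, so every component of the effective class $B_X$ really is a $(-2)$-curve, $E$ is indeed the unique prime divisor negative against $\alpha$ (hence $\sigma(E)$ the unique one negative against $\sigma_{\ast}\alpha$), and the contradiction between $[D_\Gamma]=\sigma_{\ast}[E]=2\sigma_{\ast}\delta$ and the non-$2$-divisibility of $\epsilon^{\ast}\Gamma_{(n)}$ goes through, granting the standard integral structure $\Pic(X^{[n]})\cong\epsilon^{\ast}\Pic(X^{(n)})\oplus\BZ\delta$ (Fogarty); since weak Fano surfaces are rational this input is unproblematic.

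The general type case, however, has a genuine gap, located exactly where you placed the "main obstacle". First, your exclusion of $(-1)$-curves is invalid as stated: on a non-minimal surface of general type $\omega_X$ is big but not nef, and an effective $1$-cycle of \emph{total} $K_X$-degree zero can contain components of negative $K_X$-degree compensated by components of positive degree; componentwise vanishing requires nefness. The repair is the one you gesture at — use the $\sigma$-equivariance of $\varphi\colon X^{[n]}\to (X_{\mathrm{can}})^{(n)}$ and the nef class $\varphi^{\ast}(\text{ample})$, which forces every component of $B_X$ to be contracted by $X\to X_{\mathrm{can}}$. But then a second, more serious problem appears: the contracted curves are not only $(-2)$-curves. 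They include all exceptional curves of $X\to X_{\mathrm{min}}$, which are rational curves of self-intersection $\leq -1$ whose squares can be $\equiv 0 \pmod 4$ (blow up three points on an exceptional $\p^1$: its strict transform has square $-4$). For such $\Gamma$ your obstruction "$\Gamma^2\not\equiv 0\pmod 4$" does \emph{not} apply verbatim. The conclusion can still be rescued by extra case analysis — the strict transform of the $i$-th exceptional curve pairs to $-1$ (odd) with the integral total-transform class, so it is never $2$-divisible, and the strict transform of a $(-2)$-curve of $X_{\mathrm{min}}$ pushes forward to a non-$2$-divisible class on $X_{\mathrm{min}}$ — but this analysis is absent, so as written the general type half does not go through. (A minor further point: general type surfaces can be irregular, so the integral splitting of $\mathrm{H}^2(X^{[n]},\BZ)$ with a $\BZ\delta$ summand should be replaced by Fogarty's description of $\Pic(X^{[n]})$, which holds without the regularity hypothesis.)
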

\begin{proof}
We assume that $X$ is of general type. The weak Fano case is parallel.
Let $Y$ be the canonical model of $X$, which is a surface with isolated singular points. 
The canonical line bundle on $X^{[n]}$ induces a morphism
\[ \varphi \colon X^{[n]} \to Y^{(n)} \]
and since $\sigma$ preserves this line bundle,
by Lemma~\ref{thm:DescentLemma},
there exists $\tau \in \Aut(Y^{(n)})$ such that
\[
\begin{tikzcd}
X^{[n]} \ar{d}{\varphi} \ar{r}{\sigma} & X^{[n]}\ar{d}{\varphi} \\
Y^{(n)} \ar{r}{\tau} &  Y^{(n)}.
\end{tikzcd}
\]
commutes.

Let $y_1, \ldots, y_r$ be the singular points of $Y$. The singular locus of $Y^{(n)}$ is
\begin{equation} \mathrm{Sing}\, Y^{(n)} = \Delta \cup D_{y_1} \cup \ldots \cup D_{y_r} \label{rgdg} \end{equation}
where for a point $y \in Y$ the subscheme $D_y \subset Y^{(n)}$ is defined by
\begin{equation} D_y = \{ y + z \mid z \in Y^{(n-1)} \}. \label{3452354} \end{equation}
\noindent \emph{Claim.} The automorphism $\tau$ preserves the diagonal $\Delta \subset Y^{(n)}$.\\
\emph{Proof of the claim:} The automorphism $\tau$ preserves the singular locus of $Y^{(n)}$.
Since \eqref{rgdg} is the decomposition of the singular locus into irreducible components we need to exclude the case that $\tau(\Delta) = D_{y_i}$ for some $i$.
If $n \geq 3$ then the normalizations of $D_y$ and $\Delta$ are
\begin{equation} \tilde{D_y} = D_y \cong Y^{(n-1)}, \quad \tilde{\Delta} = Y \times Y^{(n-2)}. \label{kggh} \end{equation}
To see this for the diagonal, we have a natural finite birational map $Y \times Y^{(n-2)} \to \Delta$.
Since the source is normal it factors through a map to the normalization, which is an isomorphism by Zariski's main theorem.
Since $Y^{(n-1)}$ and $Y \times Y^{(n-2)}$ are not isomorphic for $n \geq 3$ this completes the claim.
In case $n=2$ both $\Delta$ and $D_{y_i}$ are isomorphic to $Y$. The corresponding inclusion maps factor as
\begin{align*}
\iota_{\Delta} & \colon Y \xrightarrow{\Delta} D \subset Y \times Y \to Y^{(2)} \\
\iota_{D_j} & \colon Y \cong y_i \times Y \subset Y^2 \to Y^{(2)}.
\end{align*}
From this we get
\[ \iota_{\Delta}^{\ast} \omega_{Y^{(2)}} = \omega_Y^{\otimes 2}, \quad \iota_{D_j}^{\ast} \omega_{Y^{(2)}} = \omega_Y. \]
Since $\omega_{Y^{(2)}}$ is preserved under pullback by $\tau$, this excludes $\tau(\Delta) = D_{x_j}$. \qed

\vspace{5pt}
We return to the proof of the proposition.
Let $E \subset X^{[n]}$ denote the exceptional divisor.
Let $C_{y_i} \subset X$ be the curve contracted
to $y_i$
under the canonical map $X \to Y$ 
and let
$V_i \subset X^{[n]}$ be the preimage under $\epsilon$ of the subscheme
\[ \{ w_1 + w_2 \mid w_1 \in (C_{y_i})^{(2)}, w_2 \in X^{(n-2)} \} \subset X^{(n)}. \]
Then
\[ \varphi^{-1}(\Delta) = E \cup \bigcup_{i=1}^{r} V_i. \]
By the claim $\sigma$ preserves $\varphi^{-1}(\Delta)$.
Since every $V_i$ is of codimension $\geq 2$ while $E$ is a divisor,
we conclude $\sigma(E) = E$. So we get a commutative diagram
\[
\begin{tikzcd}
E \ar{d}{\varphi} \ar{r}{\sigma} & E\ar{d}{\varphi} \\
\Delta \ar{r}{\tau} & \Delta.
\end{tikzcd}
\]
In particular, $\sigma$ sends fibers of $\varphi$ to fibers.
Since the generic fiber of $E \to \Delta$ is precisely the $\p^1$ contracted by $\epsilon$ we are done.
\end{proof}

\subsection{Lifting}
Let $X$ be a smooth projective surface.
We show that every automorphism of $X^{(n)}$ lifts to an automorphism of $X^n \setminus D$.
\begin{prop} \label{prop_lifting}
For every $\tau \in \Aut(X^{(n)})$ there exists $f \in \Aut( X^n \setminus D )$ such that
the following diagram commutes:
\[
\begin{tikzcd}
X^n \setminus D \ar{d}{\rho} \ar{r}{f} & X^{n} \setminus D \ar{d}{\rho} \\
X^{(n)} \setminus \Delta \ar{r}{\tau} &  X^{(n)} \setminus \Delta.
\end{tikzcd}
\]
\end{prop}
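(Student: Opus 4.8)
The plan is to realize $\rho$ over the complement of the diagonals as a finite Galois covering and to reduce the existence of the lift to a statement about its monodromy.

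First I would observe that $\tau$ preserves the singular locus $\mathrm{Sing}(X^{(n)}) = \Delta$, hence restricts to an automorphism $\tau_{sm}$ of the smooth locus $X^{(n)} \setminus \Delta$. On this locus $\rho$ restricts to
\[ \rho_D \colon X^n \setminus D \to X^{(n)} \setminus \Delta, \]
and since $\mathrm{S}_n$ acts freely on the ordered configuration space $X^n \setminus D$, the map $\rho_D$ is a finite \'etale Galois covering with group $\mathrm{S}_n$. By the standard theory of covering spaces, a lift $f$ with $\rho_D \circ f = \tau_{sm} \circ \rho_D$ exists precisely when $\tau_{sm}$ preserves, up to conjugacy, the subgroup $H := (\rho_D)_* \pi_1(X^n \setminus D) \trianglelefteq \pi_1(X^{(n)} \setminus \Delta)$; equivalently, when the pulled-back covering $\tau_{sm}^\ast(X^n \setminus D)$ is isomorphic to $X^n \setminus D$ over $X^{(n)} \setminus \Delta$. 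So everything reduces to showing that $\tau_\ast$ preserves $H$.

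To get hold of $H$ I would use that $X$ has real dimension $4 \ge 3$, so that points cannot braid: the standard fibration argument for configuration spaces of manifolds of dimension $\ge 3$ gives
\[ \pi_1(X^{(n)} \setminus \Delta) \cong \pi_1(X) \wr \mathrm{S}_n = \pi_1(X)^n \rtimes \mathrm{S}_n, \]
under which $\rho_D$ is exactly the covering associated with the base subgroup $H = \pi_1(X)^n = \ker\mu$, where $\mu$ is the projection to $\mathrm{S}_n$. When $X$ is simply connected $H$ is trivial and $\rho_D$ is the universal cover, recovering the lifting step of Section~\ref{section_p2}.

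The crux, and the main obstacle, is then the purely group-theoretic statement that the automorphism $\tau_\ast$ of the wreath product preserves the base $\pi_1(X)^n$. This is where the non-simply-connected case genuinely goes beyond Section~\ref{section_p2}, since $\pi_1(X)^n$ need not be characteristic: already for $\pi_1(X) = \mathbb{Z}/2$ and $n = 2$ the group is $D_4$ and the base is one of two exchangeable Klein four-subgroups. To pin it down I would feed in two pieces of geometry that $\tau_\ast$ respects. First, because $\tau$ extends across $\Delta$, it sends a meridian of the smooth part $\Delta^\circ$ of the diagonal to another such meridian; under $\mu$ these meridians are transpositions, so $\tau_\ast$ preserves the normal closure $N$ of the transpositions. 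Second, $\tau$ preserves the small diagonal $\Delta_{\mathrm{small}}$, distinguished in $X^{(n)}$ as the locus of maximal tangent dimension, whose local fundamental group is a copy of $\mathrm{S}_n$ mapping isomorphically under $\mu$; hence $\tau_\ast$ carries this $\mathrm{S}_n$-complement to a conjugate complement. Analyzing the automorphisms of $\pi_1(X) \wr \mathrm{S}_n$ that are compatible with both data should then force $\tau_\ast(H) = H$. Granting this, the lift $f$ exists; running the same argument for $\tau^{-1}$, or simply noting that $f$ is a morphism of degree-$n!$ coverings lying over the isomorphism $\tau_{sm}$, shows that $f$ is an automorphism and completes the proof.
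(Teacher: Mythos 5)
Your setup coincides with the paper's: restrict to the smooth locus, use that $\rho_D$ is a normal (Galois) $\mathrm{S}_n$-covering, identify $\pi_1(X^{(n)}\setminus\Delta)\cong \pi_1(X)^n\rtimes \mathrm{S}_n$ using that points cannot braid in real dimension $4$, and reduce the lifting criterion to showing that $\tau_*$ preserves the base $H=\pi_1(X)^n$ (which, $H$ being normal, is the same whether asked up to conjugacy or on the nose). Your second geometric input --- that $\tau_*$ carries the local $\mathrm{S}_n$-complement at the small diagonal to a conjugate complement --- is also the paper's key input: there one first composes $\tau$ with a natural automorphism so that it fixes $\Delta_{\text{small}}$ pointwise, and then compares the copies of $\mathrm{S}_n$ coming from $U^{(n)}\setminus\Delta$ and $\tau(U^{(n)})\setminus\Delta$ through a small ball $V$ contained in both, obtaining equality of complements after a basepoint change; conjugacy is indeed enough, since conjugation preserves the normal subgroup $H$.

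The genuine gap is that you stop exactly at the step that carries the weight of the proof. You write that analyzing the automorphisms of $\pi_1(X)\wr\mathrm{S}_n$ compatible with your data ``should then force'' $\tau_*(H)=H$; this assertion is precisely the paper's Lemma~\ref{thm:GroupTheoryLemma}: an automorphism of $G^n\rtimes\mathrm{S}_n$ that maps $e\rtimes\mathrm{S}_n$ to itself maps $G^n\rtimes e$ to itself. As your own $D_4$ example shows, the base is not characteristic, so this is not a formality, and its proof occupies all of \S\ref{subsection:some-group-theory}: for $n\geq 3$ it rests on Lemma~\ref{Lemma_XYZ} (an element of $\mathrm{S}_n$ commuting with all its conjugates and whose centraliser contains a copy of $\mathrm{S}_{n-1}$ is trivial), while for $n=2$ --- the very case your example flags as delicate --- it requires a separate argument (compute the centraliser of $(e_{G^2},\delta)$, observe that the elements $((g,g),e_{\mathrm{S}_2})$ are exactly those expressible as a product of an element with its $(e_{G^2},\delta)$-conjugate, and rule out $\tau((g,e_G),e_{\mathrm{S}_2})=((h,i),\delta)$ by a squares argument). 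Without supplying this, your proof is incomplete at its crux. A smaller point: your first geometric input (meridians of $\Delta^\circ$ map to transpositions, hence $\tau_*$ preserves their normal closure $N$) is correct but carries no useful information here --- by van Kampen, $N$ is the kernel of $\pi_1(X^{(n)}\setminus\Delta)\to\pi_1(X^{(n)})$, it surjects onto $\mathrm{S}_n$, and preserving it does not constrain $\tau_*$ relative to the base $G^n$; the paper does not use it.
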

\begin{proof}
  The main idea is that since $\rho$ is a normal covering space, by the standard lifting criterion we only have to show
  \[
    (\tau \circ \rho)_{*}(\pi_{1}(X^{(n)} \setminus \Delta)) \subset \rho_{*}(\pi_{1}(X^{n} \setminus \Delta)).
  \]

  We first make a simplification:
  Since the small diagonal is preserved by $\tau$ and isomorphic to $X$
  the restriction $\tau|_{\Delta_{\text{small}}}$ defines an automorphism $g \in \Aut(X)$.
  Replacing $\tau$ by $(g^{-1})^{(n)} \circ \tau$ we may assume that
  \begin{equation*} \tau|_{\Delta_{\text{small}}} = \mathrm{id}_{\Delta_{\text{small}}}. \label{352t} \end{equation*}

  Choose a point $x \in X$, a small open ball $U \subset X$ with $x \in U$, and $n$ distinct points $x_{1}, \ldots, x_{n} \in U \setminus \{x\}$.
  Let $G = \pi_{1}(X, x)$, and note that we have canonical identifications $G \cong \pi_{1}(X, x_{i})$ for every $i$, given by connecting $x$ to $x_{i}$ via a path in $U$.
  Then
  \[ \pi_1(X^n \setminus D, (x_{i})) = \pi_1(X^n, (x_{i})) = G^n. \]

  The map
  \[
    \rho \colon X^n \setminus D \to X^{(n)} \setminus \Delta
  \]
  is obtained by taking the quotient by the free action of $\mathrm{S}_n$, hence it is a normal covering space, and we have the exact sequence of groups
  \begin{equation} 1 \to \pi_{1}(X^{n} \setminus D, (x_{i})) \to \pi_1(X^{(n)} \setminus D, \textstyle \sum x_{i}) \to \mathrm{S}_n \to 1. \label{ses} \end{equation}

  We define a splitting of this short exact sequence as follows.
  The inclusion of $U$ in $X$ induces an inclusion
  \[ U^{(n)} \setminus \Delta \hookrightarrow X^{(n)} \setminus \Delta. \]
  By simply-connectedness of $U$ we get $\mathrm{S}_{n} \cong \pi_1(U^{(n)} \setminus \Delta, \textstyle\sum x_{i})$, and the inclusion
  \[ \mathrm{S}_n \cong \pi_1(U^{(n)} \setminus \Delta, \textstyle\sum x_{i}) \to \pi_1( X^{(n)} \setminus \Delta, \textstyle\sum x_{i}) \]
  splits \eqref{ses}.
  Thus we have
  \[
    \pi_1( X^{(n)} \setminus \Delta, \textstyle\sum x_{i} ) \cong G^n \rtimes \mathrm{S}_n,
  \]
  and one can check that the conjugation action of $\mathrm{S}_n$ on $G^n$ is the standard permutation of factors.

  The set $\tau(U^{(n)})$ is an open neighbourhood of $nx \in X^{(n)}$.
  Picking some sufficiently small open ball $V$ with $x \in V \subset U$, we have $V^{(n)} \subset U^{(n)} \cap \tau(U^{(n)})$.
  We may assume that $x_{i} \in V$ for all $i$, hence $\sum x_{i} \in V^{(n)} \setminus \Delta$.
  We have the commutative diagram
  \[
    \begin{tikzcd}
      \pi_{1}(V^{(n)} \setminus \Delta, \textstyle \sum x_{i}) \arrow[r, "a"] \arrow[d, "b"] & \pi_{1}(U^{(n)} \setminus \Delta, \textstyle \sum x_{i}) \arrow[d, "c"] \\
      \pi_{1}(\tau(U^{(n)}) \setminus \Delta, \textstyle \sum x_{i}) \arrow[r, "d"] & \pi_{1}(X^{(n)} \setminus \Delta, \textstyle \sum x_{i}),
    \end{tikzcd}
  \]
  where the three upper-left groups are isomorphic to $\mathrm{S}_{n}$ and where $c \circ a = d \circ b$ is injective.
  It follows that $a$ and $b$ are isomorphisms and that the three upper-left groups are equal as subgroups of $\pi_{1}(X^{(n)} \setminus \Delta, \sum x_{i})$.

  Picking a path in $\tau(U^{(n)}) \setminus \Delta$ from $\tau(\sum x_{i})$ to $\sum x_{i}$ gives isomorphisms
  \begin{gather*}
    s \colon \pi_{1}(X^{(n)} \setminus \Delta, \tau(\textstyle\sum x_{i})) \stackrel{\sim}{\to} \pi_{1}(X^{(n)} \setminus \Delta, \textstyle\sum x_{i}) \\
    s \colon \pi_{1}(\tau(U^{(n)}) \setminus \Delta, \tau(\textstyle\sum x_{i})) \stackrel{\sim}{\to}
    \pi_{1}(\tau(U^{(n)}) \setminus \Delta, \textstyle\sum x_{i})
  \end{gather*}
  We have the equality of subgroups of $\pi_{1}(X^{(n)} \setminus \Delta, \sum x_{i})$
  \begin{align*}
    (s\circ\tau_{*})(\pi_{1}(U^{(n)}\setminus \Delta), \textstyle \sum x_{i})
    &= s(\pi_{1}(\tau(U^{(n)}) \setminus \Delta, \tau(\textstyle \sum x_{i}))) \\
    &= \pi_{1}(\tau(U^{(n)}) \setminus \Delta, \textstyle \sum x_{i}) \\
    &= \pi_{1}(U^{(n)} \setminus \Delta, \textstyle \sum x_{i}).
  \end{align*}
  Therefore, in the presentation $\pi_{1}(X^{(n)} \setminus \Delta, \sum x_{i}) = G^{n} \rtimes \mathrm{S}_{n}$ and the notation of \S\ref{subsection:some-group-theory}, we have
  \[
    (s\circ\tau_{*})(e_{G^{n}} \rtimes \mathrm{S}_{n}) = e_{G^{n}} \rtimes \mathrm{S}_{n},
  \]
  and so by Lemma \ref{thm:GroupTheoryLemma}, we have
  \[
    (s\circ\tau_{*})(G^{n} \rtimes e_{\mathrm{S}_{n}}) = G^{n} \rtimes e_{\mathrm{S}_{n}}.
  \]

  Since $G^{n} \rtimes e_{\mathrm{S}_{n}} = \rho_{*}(\pi_{1}(X^{n} \setminus \Delta), (x_{i}))$, this implies
  \[
    \tau_{*}(\rho_{*}(\pi_{1}(X^{n} \setminus \Delta), (x_{i}))) = s^{-1}(\rho_{*}(\pi_{1}(X^{n} \setminus \Delta), (x_{i}))) = \rho_{*}(\pi_{1}(X^{n} \setminus \Delta, y)),
  \]
  where $y \in X^{n}$ satisfies $\rho(y) = \tau(\sum x_{i})$, and is the parallel transport of $(x_{i})$ along the path defining $s$.
  By the lifting criterion for covering spaces, it follows that $\tau \circ \rho$ lifts to an
  automorphism
  $f$ as required.
  \end{proof}

\subsection{Some group theory}
\label{subsection:some-group-theory}
\begin{lemma} \label{Lemma_XYZ}
  Let $n \ge 3$, and let $\sigma \in \mathrm{S}_{n}$ be such that $\sigma$ commutes with all its conjugates, and such that the centraliser $\mathrm{C}(\sigma)$ contains a subgroup isomorphic to $\mathrm{S}_{n-1}$.
  Then $\sigma = e_{\mathrm{S}_{n}}$.
\end{lemma}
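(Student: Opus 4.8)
The plan is to first squeeze structural information out of the hypothesis that $\sigma$ commutes with all its conjugates, and then use the centraliser condition to kill the handful of possibilities that survive in small degree.

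First I would upgrade the hypothesis: although it only asserts that $\sigma$ commutes with each of its conjugates, it in fact forces \emph{any two} conjugates of $\sigma$ to commute with one another. Indeed, given conjugates $x = g\sigma g^{-1}$ and $y = h\sigma h^{-1}$, the element $g^{-1}yg$ is again a conjugate of $\sigma$, so by hypothesis $\sigma(g^{-1}yg) = (g^{-1}yg)\sigma$; conjugating this relation by $g$ gives $xy = yx$. Hence the normal closure $N = \langle\!\langle \sigma \rangle\!\rangle$, generated by the pairwise-commuting conjugates of $\sigma$, is an \emph{abelian} normal subgroup of $\mathrm{S}_n$.

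Next I would invoke the classification of normal subgroups of $\mathrm{S}_n$. For $n \ge 5$ these are $\{e\}$, $\mathrm{A}_n$, and $\mathrm{S}_n$, and only the trivial one is abelian; so $N = \{e\}$ and therefore $\sigma = e$, with no further input required. This reduces the statement to the two exceptional degrees $n=3$ and $n=4$, which are precisely the ones where $\mathrm{S}_n$ admits a nontrivial abelian normal subgroup: the group $\mathrm{A}_3 \cong \BZ/3$ for $n=3$, and the Klein four-group $V_4$ for $n=4$. Thus if $\sigma \neq e$ then the abelian subgroup $N$ must equal $\mathrm{A}_3$, forcing $\sigma$ to be a $3$-cycle, respectively $V_4$, forcing $\sigma$ to be a product of two disjoint transpositions.

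Finally I would bring in the centraliser hypothesis to eliminate these two surviving cases by a divisibility argument. The centraliser of a $3$-cycle in $\mathrm{S}_3$ has order $3$, whereas $\mathrm{S}_{n-1} = \mathrm{S}_2$ has order $2$; since $2 \nmid 3$, Lagrange's theorem forbids $\mathrm{C}(\sigma)$ from containing a copy of $\mathrm{S}_2$. Likewise, the centraliser of a double transposition in $\mathrm{S}_4$ has order $2^2\cdot 2! = 8$, whereas $\mathrm{S}_{n-1} = \mathrm{S}_3$ has order $6$, and $6 \nmid 8$. In each case the assumption on $\mathrm{C}(\sigma)$ is violated, so $\sigma \neq e$ is impossible and we conclude $\sigma = e_{\mathrm{S}_n}$. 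The argument is elementary once the first paragraph is in place; the step carrying the real content is the passage from "commutes with all conjugates" to "the normal closure is abelian," and I expect the only delicate bookkeeping to be in the exceptional degrees $n=3,4$ — where abelian normal subgroups exist — which is exactly where the centraliser hypothesis is designed to intervene.
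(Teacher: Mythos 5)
Your proof is correct and takes essentially the same approach as the paper: the conjugates of $\sigma$ generate an abelian normal subgroup of $\mathrm{S}_n$, which must be trivial for $n \ge 5$ by the classification of normal subgroups, with $n = 3,4$ treated separately. The only difference is that you write out the small cases explicitly (identifying $\mathrm{A}_3$ and the Klein four-group as the possible normal closures, then using Lagrange's theorem against the centraliser orders $3$ and $8$), which the paper leaves as ``checked directly.''
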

\begin{proof}
  By the first assumption on $\sigma$, the elements $g\sigma g^{-1}, g \in \mathrm{S}_{n}$ generate a normal, abelian subgroup $H$ of $\mathrm{S}_{n}$.
  If $n \ge 5$, then $\mathrm{A}_n$ is the only non-trivial normal subgroup of $\mathrm{S}_n$ (\cite[\S10.8.8]{Scott}).
  As $H$ is abelian, it must therefore be trivial, and so $\sigma = e_{\mathrm{S}_{n}}$.
  The remaining cases $n = 3,4$ are checked directly.
\end{proof}

Let $G$ be a group, and define $G^{n} \rtimes \mathrm{S}_{n}$ by the permutation action of $\mathrm{S}_{n}$ on the factors of $G^{n}$.
We write $e\rtimes\mathrm{S}_n=e_{G^n} \rtimes \mathrm{S}_{n}$ and $G^{n} \rtimes e=G^n\rtimes e_{\mathrm{S}_n} \subset G^{n} \rtimes \mathrm{S}_{n}$ for the groups $\mathrm{S}_{n}, G^{n}$ thought of as subgroups of $G^{n} \rtimes \mathrm{S}_{n}$.
\begin{lemma}
\label{thm:GroupTheoryLemma}
  Let $\tau \colon G^{n} \rtimes \mathrm{S}_{n} \xrightarrow{\sim} G^{n} \rtimes \mathrm{S}_{n}$ be an automorphism.
  If $\tau(e \rtimes \mathrm{S}_{n}) = e \rtimes \mathrm{S}_{n}$, then $\tau(G^{n} \rtimes e) = G^{n} \rtimes e$.
\end{lemma}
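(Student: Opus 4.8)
Let $q \colon G^{n} \rtimes \mathrm{S}_{n} \to \mathrm{S}_{n}$ be the projection, so that $\ker q = G^{n} \rtimes e$. The plan is to prove the equivalent statement that the homomorphism $\psi \coloneqq q \circ \tau$ restricts to the trivial map on $G^{n} \rtimes e$; this gives $\tau(G^{n}\rtimes e) \subseteq \ker q = G^{n} \rtimes e$, and applying the same to $\tau^{-1}$ (which also fixes $e \rtimes \mathrm{S}_{n}$) yields the reverse inclusion and hence equality. Since $q$ restricts to an isomorphism $e \rtimes \mathrm{S}_{n} \xrightarrow{\sim} \mathrm{S}_{n}$ and $\tau(e\rtimes\mathrm{S}_{n}) = e\rtimes\mathrm{S}_{n}$, the map $\alpha \coloneqq \psi|_{e\rtimes\mathrm{S}_n}$, viewed as a self-map of $\mathrm{S}_{n}$ via this identification, is an automorphism of $\mathrm{S}_{n}$. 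We may assume $n \ge 3$, since the cases $n \le 2$ are elementary; this is precisely the range in which Lemma~\ref{Lemma_XYZ} is available.

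\textbf{The equivariance relation.} Writing $\iota_{i} \colon G \to G^{n} \rtimes e$ for the inclusion of the $i$-th factor and $\psi_{i} \coloneqq \psi \circ \iota_{i} \colon G \to \mathrm{S}_{n}$, I would first record that conjugation by $(e;\rho) \in e \rtimes \mathrm{S}_{n}$ permutes the factors, so $(e;\rho)\,\iota_{i}(h)\,(e;\rho)^{-1} = \iota_{\rho(i)}(h)$. Applying the homomorphism $\psi$ and using $\psi((e;\rho)) = \alpha(\rho)$ gives the key relation
\[
  \psi_{\rho(i)}(h) = \alpha(\rho)\,\psi_{i}(h)\,\alpha(\rho)^{-1}, \qquad \rho \in \mathrm{S}_{n},\ h \in G.
\]
Fix $g \in G$ and set $\sigma \coloneqq \psi_{1}(g)$. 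The crux is to verify the two hypotheses of Lemma~\ref{Lemma_XYZ} for $\sigma$. First, since the factors of $G^{n}$ commute, $\sigma$ commutes with every $\psi_{j}(g)$ for $j \neq 1$; and because $\alpha$ is surjective, every conjugate of $\sigma$ in $\mathrm{S}_{n}$ has the form $\alpha(\rho)\,\sigma\,\alpha(\rho)^{-1} = \psi_{\rho(1)}(g)$, hence lies in $\{\psi_{1}(g),\ldots,\psi_{n}(g)\}$. As $\sigma$ commutes with each of these, it commutes with all its conjugates. Second, for every $\rho$ in the copy of $\mathrm{S}_{n-1}$ fixing $1$ we have $\alpha(\rho)\,\sigma\,\alpha(\rho)^{-1} = \psi_{1}(g) = \sigma$, so $\alpha$ maps this $\mathrm{S}_{n-1}$ injectively into $\mathrm{C}(\sigma)$, giving a subgroup of $\mathrm{C}(\sigma)$ isomorphic to $\mathrm{S}_{n-1}$.

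\textbf{Conclusion.} By Lemma~\ref{Lemma_XYZ} we then get $\sigma = \psi_{1}(g) = e_{\mathrm{S}_{n}}$. As $g \in G$ was arbitrary, $\psi_{1}$ is trivial, and the equivariance relation propagates this to all $\psi_{i}$; since these generate $\psi|_{G^{n}\rtimes e}$, the restriction $\psi|_{G^{n}\rtimes e}$ is trivial, which is what was needed. I expect the main obstacle to be organizing the bookkeeping in the equivariance relation so that the two combinatorial properties of $\sigma$ fall out cleanly — in particular using surjectivity of $\alpha$ to capture \emph{all} conjugates of $\sigma$ and injectivity of $\alpha$ to produce a genuine $\mathrm{S}_{n-1}$ inside the centraliser. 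Everything else is formal, the real content being concentrated in the reduction to Lemma~\ref{Lemma_XYZ}.
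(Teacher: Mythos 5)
Your argument for $n \ge 3$ is correct, and it is essentially the paper's own proof in slightly different packaging: the paper also writes $\tau(g_{(i)}, e_{\mathrm{S}_{n}}) = (x,\sigma)$, transports the two obvious properties of $(g_{(i)}, e_{\mathrm{S}_{n}})$ (commuting with all its $e \rtimes \mathrm{S}_{n}$-conjugates, and being centralised by the copy of $\mathrm{S}_{n-1}$ fixing $i$) through $\tau$ using the hypothesis $\tau(e\rtimes\mathrm{S}_{n}) = e\rtimes\mathrm{S}_{n}$, and then applies Lemma~\ref{Lemma_XYZ} to conclude $\sigma = e_{\mathrm{S}_{n}}$. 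Your bookkeeping via $\psi = q \circ \tau$ and the automorphism $\alpha$ of $\mathrm{S}_{n}$ is a clean way to organize exactly this.

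The genuine gap is the case $n = 2$, which you wave off as ``elementary''. It is not: it is the delicate half of the paper's proof, and it is precisely the case the paper cannot do without, since the lemma is applied with $n = 2$ and $G = \pi_{1}(X)$ (large, e.g., for a product of higher-genus curves) in the proof of Theorem~\ref{thm2}. Moreover, your method is structurally unavailable there: Lemma~\ref{Lemma_XYZ} is false for $n = 2$, since the transposition $\delta \in \mathrm{S}_{2}$ commutes with all its conjugates ($\mathrm{S}_{2}$ is abelian) and its centraliser contains a subgroup isomorphic to $\mathrm{S}_{1}$, yet $\delta \neq e_{\mathrm{S}_{2}}$; so nothing in your setup forces $\psi_{i}(g) = e_{\mathrm{S}_{2}}$. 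The paper instead argues as follows for $n=2$: the hypothesis gives $\tau(e_{G^{2}},\delta) = (e_{G^{2}},\delta)$, hence $\tau$ preserves the centraliser $C = \{((g,g),e_{\mathrm{S}_{2}}),\ ((g,g),\delta) \mid g \in G\}$; inside $C$ the diagonal subgroup $\{((g,g),e_{\mathrm{S}_{2}})\}$ is $\tau$-invariant because its elements are exactly those expressible as a product of an element with its $(e_{G^{2}},\delta)$-conjugate; after normalizing $\tau$ to fix this diagonal pointwise, the possibility $\tau((g,e_{G}),e_{\mathrm{S}_{2}}) = ((h,i),\delta)$ is excluded by a parity-of-squares trick (the equation $x(e_{G^{2}},\delta)x(e_{G^{2}},\delta) = ((g,g),e_{\mathrm{S}_{2}})$ forces $g = h^{2}$, so $((g,e_{G}),e_{\mathrm{S}_{2}})$ is a square, whereas no element with $\delta$ in its $\mathrm{S}_{2}$-component is a square). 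Some argument of this kind is genuinely needed; as written, your proof establishes the lemma only for $n \geq 3$.
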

\begin{proof}
  We need only show that $\tau(G^{n} \rtimes e) \subseteq G^{n} \rtimes e$, since applying this to $\tau^{-1}$ gives
  \[
    \tau^{-1}(G^{n} \rtimes e) \subseteq G^{n}\rtimes e \Rightarrow G \rtimes e \subseteq \tau(G^{n} \rtimes e).
  \]
  Let $g \in G$ be any element, and let $g_{(i)} \in G^{n}$ be the inclusion of $g$ in the $i$-th factor.
  The group $G^{n} \rtimes e$ is generated by elements of the form $(g_{(i)}, e_{\mathrm{S}_{n}})$, hence it suffices to show that $\tau(g_{(i)}, e_{\mathrm{S}_{n}}) \in G^{n} \rtimes e$.

  Note first that $(g_{(i)}, e_{\mathrm{S}_{n}})$ commutes with all its $e \rtimes \mathrm{S}_{n}$-conjugates, hence so does $\tau(g_{(i)}, e_{\mathrm{S}_{n}})$.
  Note further that $(g_{(i)}, e_{\mathrm{S}_{n}})$ commutes with every element $(e_{G^{n}}, \delta)$ where $\delta$ fixes $i$.
  Hence the centraliser of $(g_{(i)},e_{\mathrm{S}_{n}})$ contains a subgroup isomorphic to $\mathrm{S}_{n-1}$ inside $e \rtimes \mathrm{S}_{n}$, and so likewise the centraliser of $\tau(g_{(i)},e_{\mathrm{S}_{n}})$ contains such a subgroup in $e \rtimes \mathrm{S}_{n}$.
  If now
  \[
    \tau(g_{(i)}, e_{\mathrm{S}_{n}}) = (x,\sigma), \ \ \ \ x \in G^{n}, \sigma \in \mathrm{S}_{n},
  \]
  we have that $\sigma$ commutes with all its conjugates, and $|\mathrm{C}_{\mathrm{S}_{n}}(\sigma)| \ge (n-1)!$, whence by Lemma~\ref{Lemma_XYZ} we have $\sigma = e_{\mathrm{S}_{n}}$ if $n \ge 3$.

  Assume that $n = 2$.
  Let $\delta \in \mathrm{S}_2$ be the non-trivial element.
  By our assumption, $\tau(e_{G^2},\delta) = (e_{G^2},\delta)$, and so the centraliser $C = \mathrm{C}_{G^{2} \rtimes \mathrm{S}_{2}}(e_{G^2},\delta)$ is preserved by $\tau$.
By a direct check we have
$C = \{((g,g), e_{\mathrm{S}_{2}}), ((g,g),\delta) \mid g \in G\}$.
  Next observe that elements $((g,g), e_{\mathrm{S}_{2}})$ can all be written as a product of an element $x$ and its $(e_{G^2},\delta)$-conjugate, e.g.~take $x = ((g,e_G),e_{\mathrm{S}_{2}})$.
  On the other hand, elements of the form $((g,g), \delta)$ cannot be written in such a way, since the product of an element with its $(e_{G^2},\delta)$-conjugate must have $e_{\mathrm{S}_{2}}$ in the $\mathrm{S}_{2}$-factor.
  Therefore the set of elements of the form $((g,g),e_{\mathrm{S}_2})$ is preserved by $\tau$.

  Since these elements form a subgroup of $G^2 \rtimes \mathrm{S}_{2}$, the automorphism $\tau$ defines by restriction an automorphism $\psi$ of $G$.
  After post-composing $\tau$ with the automorphism $((g,h),x) \mapsto ((\psi^{-1}(g),\psi^{-1}(h)), x)$, we may assume that $\tau$ in fact fixes each element $((g,g),e_{\mathrm{S}_2})$.

  Now consider the element $((g,e_{G}),e_{\mathrm{S}_{2}})$. It satisfies the following equation in $x$:
  \begin{equation}
    \label{eqn:GroupEquation}
    x(e_{G^{2}},\delta)x(e_{G^{2}},\delta) = ((g,g),e_{\mathrm{S}_2}).
  \end{equation}
  The same equation must therefore be satisfied by $\tau((g,e_{G}),e_{\mathrm{S}_{2}})$.

  Assume now for a contradiction that $\tau((g,e_{G}),e_{\mathrm{S}_{2}}) = ((h,i),\delta)$.
  Since equation \eqref{eqn:GroupEquation} is satisfied by $((h,i),\delta)$, we must have $(h^2, i^2) = (g,g)$.
  Therefore $g$ is a square, hence so is $((g,e_{G}),e_{\mathrm{S}_2})$.
  But clearly $((h,i),\delta)$ is not a square, and so we have a contradiction.
\end{proof}

\subsection{Extension}
Let $\tau \colon X^{(n)} \to X^{(n)}$ be an automorphism and assume that there exists a commutative diagram
\[
\begin{tikzcd}
X^n \setminus D \ar{d}{\rho} \ar{r}{f} & X^{n} \setminus D \ar{d}{\rho} \\
X^{(n)} \setminus \Delta \ar{r}{\tau} &  X^{(n)} \setminus \Delta.
\end{tikzcd}
\]
for some automorphism $f \in \Aut( X^n \setminus D )$. 
\begin{prop} \label{prop_extension}
In the situation above the automorphism $f$ extends to an automorphism $\tilde{f} \colon X^n \to X^n$ such that $\tau \circ \rho = \rho \circ \tilde{f}$.
\end{prop}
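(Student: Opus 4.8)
The plan is to extend $f$ by taking the closure of its graph and invoking Zariski's main theorem, in the same spirit as the proof of Proposition~\ref{Prop_Sym1}. The crucial structural input is that $f$ is a lift, along the \emph{finite} map $\rho$, of the isomorphism $\tau$: extension of a morphism to a projective target across a codimension-$2$ locus fails in general, so this extra structure must be exploited.

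Concretely, I would let $\Gamma \subset X^n \times X^n$ be the closure of the graph $\{(z, f(z)) \mid z \in X^n \setminus D\}$, an irreducible closed subvariety, with projections $p_1, p_2 \colon \Gamma \to X^n$. Set $\psi = \tau \circ \rho \colon X^n \to X^{(n)}$, which is finite since $\rho$ is finite and $\tau$ is an isomorphism. On the dense open subset of $\Gamma$ given by the honest graph one has $\rho \circ p_2 = \psi \circ p_1$, which is precisely the relation $\rho \circ f = \tau \circ \rho$; as both sides are morphisms $\Gamma \to X^{(n)}$ into a separated target agreeing on a dense subset of the reduced scheme $\Gamma$, they agree on all of $\Gamma$.

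The main step is to show that $p_1 \colon \Gamma \to X^n$ is an isomorphism. It is proper (as $X^n$ is proper) and birational (an isomorphism over $X^n \setminus D$), and $X^n$ is smooth hence normal, so by Zariski's main theorem it suffices to prove that $p_1$ has finite fibres. Suppose instead $p_1$ contracts a curve $Z \subset \Gamma$ to a point $x$. Then $\psi \circ p_1(Z) = \psi(x)$ is a point, and by the identity $\psi \circ p_1 = \rho \circ p_2$ the image $\rho(p_2(Z))$ is a point as well; since $\rho$ is finite, $p_2(Z)$ must then also be a point. Hence $Z$ would be contracted by both projections and so reduce to a single point of $X^n \times X^n$, a contradiction. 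This finiteness argument, driven by the finiteness of $\rho$ (and of $\psi$), is the heart of the proof and the place where the hypotheses genuinely enter; everything else is formal.

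Finally, with $p_1$ an isomorphism, $\Gamma$ is the graph of a morphism $\tilde f = p_2 \circ p_1^{-1} \colon X^n \to X^n$ restricting to $f$ on $X^n \setminus D$, and the relation $\rho \circ p_2 = \psi \circ p_1$ becomes $\rho \circ \tilde f = \tau \circ \rho$ on all of $X^n$. Running the identical construction for $f^{-1}$ yields an extension $\widetilde{f^{-1}}$; since $\tilde f \circ \widetilde{f^{-1}}$ and $\widetilde{f^{-1}} \circ \tilde f$ restrict to the identity on the dense open $X^n \setminus D$, they are the identity everywhere, so $\tilde f$ is an automorphism, completing the proof.
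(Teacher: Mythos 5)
Your proof is correct, but it takes a genuinely different route from the paper's. You extend $f$ geometrically: take the closure $\Gamma$ of the graph, observe that the identity $\rho \circ p_2 = (\tau\circ\rho) \circ p_1$ propagates from the dense open graph to all of $\Gamma$ (reduced source, separated target), and then use finiteness of $\rho$ to rule out curves contracted by $p_1$ --- if $p_1$ contracted a curve $Z$, then $\rho$ would contract $p_2(Z)$, forcing $p_2(Z)$ to be a point and hence $Z$ to be a point --- so that Zariski's main theorem (proper + quasi-finite + birational onto the normal variety $X^n$) makes $p_1$ an isomorphism; applying the same construction to $f^{-1}$ shows $\tilde f$ is an automorphism. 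This is essentially the same mechanism as the paper's own Proposition~\ref{Prop_Sym1} (graph closure plus ZMT), transplanted to the lifting situation, and you correctly identify finiteness of $\rho$ as the point where extension across codimension~$2$ becomes possible at all. The paper instead argues sheaf-theoretically: since $\rho$ and $\rho|_U$ are finite, hence affine, the morphism $f$ over $V = X^{(n)}\setminus\Delta$ corresponds to an isomorphism of quasi-coherent $\mathcal{O}_V$-algebras $(\tau^{-1})_*\rho_*\mathcal{O}_U \to \rho_*\mathcal{O}_U$; pushing forward along $j \colon V \hookrightarrow X^{(n)}$ and using $\iota_*\mathcal{O}_U = \mathcal{O}_{X^n}$ (normality of $X^n$ and $\operatorname{codim} D = 2$) turns this into an isomorphism of $\mathcal{O}_{X^{(n)}}$-algebras $(\tau^{-1})_*\rho_*\mathcal{O}_{X^n} \to \rho_*\mathcal{O}_{X^n}$, which by relative $\operatorname{Spec}$ is exactly the desired $\tilde f$ with $\tau\circ\rho = \rho\circ\tilde f$, automorphism property included for free. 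The trade-off: the paper's argument never uses properness of $X^n$ (only normality, codimension~$2$, and affineness of $\rho$), and produces the extension canonically in one step, whereas your argument is more elementary and visibly parallel to Proposition~\ref{Prop_Sym1}, but relies on projectivity of $X^n$ for the properness of $p_1$ and needs the separate bootstrapping step with $f^{-1}$ to get invertibility.
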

\begin{proof}
Let $U = X^n \setminus D$ and $V = X^{(n)} \setminus \Delta$ and consider the diagram
\[
\begin{tikzcd}
U \ar{d}{\rho_U} \ar{r}{\iota} & X^n\ar{d}{\rho} \\
V \ar{r}{j} &  X^{(n)}
\end{tikzcd}
\]
where the horizontal maps $\iota$ and $j$ are the inclusions.
Because $U$ is of codimension $2$ and $X^n$ normal we have
$\iota_{\ast} \CO_U = \CO_{X^n}$.
Both $\rho$ and $\rho_U$ are finite, therefore affine, so in particular we have
\[ X^n = \underline{\Spec}\, \rho_{\ast} \CO_{X^n}, \quad U = \underline{\Spec}\, \rho_{U \ast} \CO_{U}. \]

The category of affine schemes over a base $S$ is equivalent to the opposite of the category of quasi-coherent $\CO_S$-algebras.
Hence the $V$-morphism
\[
\begin{tikzcd} U \ar{rr}{f} \ar[swap]{dr}{\rho_U} & & U \ar{dl}{\tau^{-1} \circ \rho_U}\\
& V &
\end{tikzcd}
\]
corresponds to an isomorphism of quasi-coherent $\CO_V$-algebras
\[ \psi_f \colon (\tau^{-1})_{\ast} \rho_{\ast} \CO_U \to \rho_{\ast} \CO_U \]
By pushforward along $j$ we obtain an isomorphism of $\CO_{X^{(n)}}$-algebras
\[ j_{\ast} \psi_f \colon j_{\ast} \tau^{-1}_{\ast} \rho_{\ast} \CO_U \to j_{\ast} \rho_{\ast} \CO_U. \]
We have $j_{\ast} \rho_{\ast} \CO_U = \rho_{\ast} \iota_{\ast} \CO_U = \rho_{\ast} \CO_{X^n}$.
Moreover, because the automorphism $\tau$ of $U$ is the restriction of the automorphism $\tau$ of $X^n$ we also have
\[ j_{\ast} (\tau^{-1})_{\ast} \rho_{\ast} \CO_U = (\tau^{-1})_{\ast} j_{\ast} \rho_{\ast} \CO_U = (\tau_{-1})_{\ast} \rho_{\ast} \CO_{X^n}. \]
The pushforward $j_{\ast} \psi_{f}$ thus corresponds to an isomorphism $\tilde{f}$ from $\rho \colon X^n \to X^{(n)}$ to $\tau_{-1} \circ \rho \colon X^n \to X^{(n)}$,
hence to an isomorphism $\tilde{f} \in \Aut(X^n)$ with the desired properties.
\end{proof}

\subsection{Splitting the automorphism}
\begin{prop} \label{Prop_Split_Auto}
Assume the surface $X$ is weak Fano or of general type,
and let $f \in \Aut(X^n)$ be an automorphism. Then at least one of the following holds:
\begin{enumerate}
\item[(a)] $f = \alpha \circ (f_1 \times \cdots \times f_n)$ for some $f_i \in \Aut(X)$ and $\alpha \in \mathrm{S}_n$, or
\item[(b)] $X \cong C_1 \times C_2$ for smooth curves $C_1, C_2$.
Moreover, if $C_1 \cong C_2$ then
\[ f = \alpha \circ (g_1 \times \cdots \times g_{2n}) \]
for some $g_i \in \Aut(C_1)$ and some $\alpha \in \mathrm{S}_{2n}$.
If $C_1 \ncong C_2$ then under the isomorphism $X^n \cong C_1^n \times C_2^n$ we have
\[ f \cong (\alpha_1 \times \alpha_2) \circ (g_1 \times \cdots \times g_n \times h_1 \times \cdots \times h_n) \]
for some $g_i \in \Aut(C_1), h_i \in \Aut(C_2)$ and $\alpha_1, \alpha_2 \in \mathrm{S}_n$.
\end{enumerate}
\end{prop}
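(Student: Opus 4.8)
The plan is to reduce to the (anti)canonical model, where the positivity of $\omega_X$ becomes genuine ampleness, and then to read off the product structure by restricting to the coordinate surfaces and applying the Hodge index theorem. First I would pass to that model: in the general type case let $\nu\colon X\to Y$ be the canonical model of the proof of Proposition~\ref{Prop_Sym2}, and in the weak Fano case let $\nu\colon X\to Y$ be the birational morphism onto the (anticanonically polarised) del Pezzo surface $Y$. In both cases $Y$ has at worst Du Val singularities, so $\omega_Y$ is a line bundle with $\omega_Y^{\pm1}$ ample, and $\nu$ is crepant, $\omega_X\cong\nu^\ast\omega_Y$. Writing $\nu_n=\nu\times\cdots\times\nu\colon X^n\to Y^n$ and $q_i\colon Y^n\to Y$ for the projections, we have $\omega_{X^n}^{\pm1}\cong\nu_n^\ast\bigl(\bigotimes_i q_i^\ast\omega_Y^{\pm1}\bigr)$, the pullback of an ample line bundle; since $f^\ast\omega_{X^n}\cong\omega_{X^n}$, Lemma~\ref{thm:DescentLemma} produces $\bar f\in\Aut(Y^n)$ with $\nu_n\circ f=\bar f\circ\nu_n$. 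Because $\nu_n$ is birational and every automorphism of $Y$ lifts uniquely to its minimal resolution $X$, a decomposition of $\bar f$ of the required form lifts to one of $f$, so it is enough to split $\bar f$.

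I would then reduce the problem to a single-factor statement. Setting $\pi_i\coloneqq q_i\circ\bar f$ we have $\bar f=(\pi_1,\dots,\pi_n)$, so $\bar f$ has the shape $\alpha\circ(f_1\times\cdots\times f_n)$ exactly when each $\pi_i$ factors through one projection $q_{\alpha(i)}$ via an automorphism of $Y$. Fix $i$ and $j$, and let $\ell_j\cong Y$ be the coordinate surface through general smooth points obtained by letting only the $j$-th coordinate vary. Since $\bar f$ preserves the canonical bundle we have $\bigotimes_i\pi_i^\ast\omega_Y\cong\omega_{Y^n}\cong\bigotimes_i q_i^\ast\omega_Y$; restricting to $\ell_j$, where $\bigotimes_i q_i^\ast\omega_Y$ restricts to $\omega_Y$, gives the relation
\[
\bigotimes_{i=1}^n(\pi_i|_{\ell_j})^\ast\omega_Y\;\cong\;\omega_Y
\]
of line bundles on the normal Gorenstein surface $Y$.

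The heart of the argument, which I expect to be the main obstacle, is to extract the splitting from this relation by intersection theory on $Y$. Put $H=\pm K_Y$ (ample) and $D_i=(\pi_i|_{\ell_j})^\ast H$, so $\sum_i D_i=H$ with each $D_i$ nef, and each $\pi_i|_{\ell_j}\colon Y\to Y$ is constant ($D_i=0$), has one-dimensional image ($D_i\neq0$, $D_i^2=0$), or is dominant ($D_i^2=\deg(\pi_i|_{\ell_j})\,H^2>0$). If some $D_i$ is dominant, the Hodge index inequality $H^2D_i^2\le(H\cdot D_i)^2$ yields $H\cdot D_i\ge H^2$, and since $H^2=\sum_k H\cdot D_k$ with all terms nonnegative this is only possible if $D_k=0$ for $k\neq i$ and $\deg(\pi_i|_{\ell_j})=1$; thus $\pi_i|_{\ell_j}$ is an automorphism and all other $\pi_k|_{\ell_j}$ are constant. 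If instead no $D_i$ is dominant, every nonconstant $\pi_i|_{\ell_j}$ factors through a fibration of $Y$ onto a curve and $\pm K_Y$ is a positive combination of the corresponding fibre classes, while the embedding $\ell_j\cong Y\hookrightarrow Y^n$ lands in a product of curves. The delicate points are to justify intersection theory and Hodge index across the Du Val singularities (legitimate because $K_Y$ is Cartier), and---crucially---to deduce in the second case that $Y$ itself is a product of two curves $C_1\times C_2$; this is precisely the exceptional configuration of the proposition.

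Finally I would assemble the conclusion. If $X$ is not a product of curves the dominant alternative holds for every pair $(i,j)$ and determines a map $i\mapsto\alpha(i)$, bijective because $\bar f$ is an isomorphism; this gives $\bar f=\alpha\circ(f_1\times\cdots\times f_n)$ and, after lifting along $\nu_n$, conclusion~(a). If instead the second case occurs then $Y\cong C_1\times C_2$, and as $Y$ is smooth here we have $X=Y\cong C_1\times C_2$, so that $X^n\cong\prod_{a=1}^{2n}E_a$ with each $E_a\in\{C_1,C_2\}$ is a product of $2n$ curves carrying an ample canonical or anticanonical class. Repeating the coordinate-restriction argument one dimension lower---where the relation becomes the degree identity $\sum_b d_b\deg\omega_{E_b}^{\pm1}=\deg\omega_{E_a}^{\pm1}$, with $d_b$ the degrees of the components, and the requirement that $f$ be injective forces a single factor of degree one---shows that $f$ permutes the $2n$ curve factors by automorphisms; equivalently one may invoke the classical description of $\Aut$ of a product of curves of general type (respectively of $(\p^1)^{2n}$). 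When $C_1\cong C_2$, in particular when both are $\p^1$, any two factors may be swapped and we obtain $\mathrm{S}_{2n}$; when $C_1\ncong C_2$ the two types cannot be interchanged, giving $\mathrm{S}_n\times\mathrm{S}_n$, which is conclusion~(b).
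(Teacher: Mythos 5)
Your Hodge-index analysis of the dominant case is correct and is a genuinely more elementary route than the paper's (which splits according to whether $\Omega_X$ is decomposable and uses Krull--Schmidt \cite{At} on $\Omega_{X^n}$), but your proposal has a genuine gap exactly where the difficulty of the proposition is concentrated: the degenerate case in which no restriction $\pi_i|_{\ell_j}$ is dominant. There you must prove that $X$ is a product of two curves and that $f$ then has the stated exceptional form; you flag this as ``delicate'' and ``crucial'' but give no argument, and it does not follow from what you have established. Knowing only that $\ell_j\cong Y$ embeds in a product of curves with every projection of one-dimensional image is not enough: a sufficiently ample smooth hypersurface $S$ in a product of three curves has exactly this property, has $\omega_S$ ample, and is not a product of two curves (its signature is nonzero), so any proof must exploit the automorphism structure of $Y^n$ in an essential way. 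Moreover, there exist minimal surfaces of general type with ample canonical bundle and split cotangent bundle that are not products of curves (irreducible quotients of the bidisc); ruling out the degenerate case for these requires serious input, e.g.\ that such surfaces admit no fibration over a curve. The paper settles precisely this dichotomy by quoting Beauville \cite{Bea3} (split tangent bundle implies $\omega_X$ ample) and then the uniqueness of product decompositions of canonically polarized varieties \cite{BHPS}, which forces any automorphism of $X^n$ to permute the irreducible factors -- $X$ itself when $X$ is not a product of curves, the $2n$ curve factors when it is. Your proposal contains no substitute for this step.

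A second defect concerns the reduction to the canonical model when $X$ is of general type but not minimal, which the hypotheses allow. In that case $\nu\colon X\to Y$ is not crepant, $\omega_X\not\cong\nu^\ast\omega_Y$, and $X$ is not the minimal resolution of $Y$. The descent of $f$ to $\bar f$ can be saved (functoriality of the canonical ring), and a product-type $\bar f$ can still be lifted to $X^n$ by a slice argument as in the paper's proof; but your concluding step ``as $Y$ is smooth here we have $X=Y$'' genuinely fails: for $X$ a blow-up of $C_1\times C_2$ at a point one has $Y\cong C_1\times C_2$ smooth while $X\neq Y$, and $X$ is \emph{not} a product of curves, so conclusion (b) as you derive it would be false for such $X$. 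To repair this you would additionally have to show that the mixed (non-product) automorphisms of $Y^n$ do not lift to $X^n$ when $X\neq Y$ -- another argument missing from the proposal.
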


For the proof we recall the general fact that the category of coherent sheaves on a smooth proper variety over an algebraically closed field is Krull--Schmidt,
that is every object in it can be uniquely decomposed in irreducible components \cite[Theorem 3]{At}.

Moreover, for a vector bundle $\mathcal{E}$ on $X$ we will write
\[ \mathcal{E}_i \coloneqq p_i^{\ast} \mathcal{E} \]
for the pullback of $\CE$ to $X^n$ along the projection to the $i$-th factor.
\begin{proof}
Let $X$ be a surface of general type.
We assume first that the cotangent bundle $\Omega_X$ is indecomposable.
In this case the bundle $\Omega_{X^n}$ has the Krull--Schmidt decomposition
\[ \Omega_{X^n} = \Omega_{X,1} \oplus \ldots \oplus \Omega_{X,n}. \]
Since this decomposition is unique and $\Omega_{X^n}$ is preserved under pullback by $f$,
for every $i$ we find
$f^{\ast} \Omega_{X,i} = \Omega_{X,j(i)}$ for some $j(i)$.
After composing $f$ with a
permutation we may assume
$f^{\ast} \Omega_{X,i} \cong \Omega_{X,i}$
and thus
\begin{equation} f^{\ast} \omega_{X,i} \cong \omega_{X,i}. \label{4gsdff} \end{equation}
Let $\varphi\colon X \to Y$ be the map to the canonical model of $X$ induced by a power of $\omega_{X}$.
From \eqref{4gsdff} we get an element $g_i \in \Aut(Y)$ such that the diagram
\[
\begin{tikzcd}
X^n \ar{d}{p_i} \ar{r}{f} & X^{n} \ar{d}{p_i} \\
Y \ar{r}{g_i} &  Y
\end{tikzcd}
\]
commutes; here $p_i$ is the composition of the projection to the $i$-th factor with $\varphi$.
Let $U \subset X$ be the open subset where $\varphi$ is an isomorphism.
Since $i$ was arbitrary we conclude
\[ (p_1, \ldots, p_n) \circ f|_{U^n} = g_1 \times \cdots \times g_n \]
For any fixed $(x_2, \ldots, x_n) \in U^{n-1}$ the composition
\[ f_1 \colon X
\hookrightarrow
X^n \xrightarrow{f} X^n \xrightarrow{p_1} X, \]
where the first map is $x \mapsto (x,x_2, \ldots, x_n)$,
defines a lift of $g_1 \in \Aut(Y)$ to $f_1 \in \Aut(X)$.
Since the lift is unique if it exists, it is independent of the choice of the $x_i$.
Using a parallel argument we find lifts $f_i \in \Aut(X)$ of $g_i$ for any $i$. 
The equality
\[ f = f_1 \times \cdots \times f_n \]
then holds on an non-empty open subset of $X^n$ 
and hence holds everywhere.

Assume now that the cotangent bundle of $X$ decomposes into line bundles:
\[ \Omega_X \cong \mathcal{L} \oplus \mathcal{M}. \]
By a result of Beauville \cite[\S 5.1, Proposition~4.3]{Bea3} the canonical bundle $\omega_X$ is ample and hence $W = X^n$ is canonically polarized.
By \cite[Theorem~1.3 and \S4]{BHPS}
it follows that the variety $W$ can be decomposed
into a product of irreducible factors and the decomposition is \emph{unique} up to reordering the factors.
Here a variety $Z$ is called irreducible it does not admit a non-trivial product decomposition $Z \cong Z_1 \times Z_2$.

If $X$ is not the product of curves then $W$ has the following two factorizations.
The standard one, induced by the projection maps $p_i$,
\[ p = (p_1, \ldots,p_n) \colon W \xrightarrow{\sim} X^n \]
and the one obtained by mapping the first under the automorphism $f$,
\[ p \circ f \colon W \xrightarrow{\sim} X^n. \]
Since both must coincide up to reordering (and factorwise isomorphism)
we conclude there exist automorphisms $f_i \in \Aut(X)$ and a permutation $\alpha \in \mathrm{S}_n$ such that
$p_i \circ f = f_i \circ p_{\alpha(i)}$ for all $i$. 
This yields the claim.

We therefore can assume that $X$ is the product of curves
\[ X = C_1 \times C_2 \]
and hence that
\[ \mathcal{L} = q_1^{\ast} \Omega_{C_1}, \quad \mathcal{M} = q_2^{\ast} \Omega_{C_2} \]
where we let $q_j$ denote the projection from $X$ to the $j$-th factor.

If $C_1 \ncong C_2$ then the pullback by $f$ preserves the set of $\mathcal{L}_{i}$ and the set of $\mathcal{M}_{i}$ separately
(since the image of the complete linear system defined by a power of $\CL$ is precisely $C_1$, and likewise for $\CM$).
Hence there exists $g_i \in \Aut(C_1)$ and a permutation $\alpha_1 \in \mathrm{S}_n$ such that
the following diagram commutes:
\[
\begin{tikzcd}
X^n \ar[swap]{d}{q_1 \circ p_i} \ar{r}{f} & X^{n} \ar{d}{q_1 \circ p_{\alpha_1(i)}} \\
C_1 \ar{r}{g_i} &  C_1.
\end{tikzcd}
\]
Since the parallel statement holds for the factor $C_2$, this yields the claim.

If $C_1 \cong C_2$ then we may determine $\Aut(C_1^{2n})$ as we determined $\Aut(X^n)$ when $\Omega_X$ is indecomposable,
or again apply the result \cite[Theorem~1.3]{BHPS}.

Finally, we consider the case where $X$ is weak Fano.
If $\Omega_X$ is indecomposable, then we can argue as for general type.
If $\Omega_X$ decomposes, then by the classification of weak Fano surfaces, or using Beauville \cite{Bea3} and that $X$ is rational so simply-connected,
we have that $X$ is isomorphic to $\p^1 \times \p^1$. The claim then follows as in case $C_1 = C_2$ above.
%
\end{proof}

\begin{cor} \label{Cor_split}
Assume the surface $X$ is weak Fano or of general type. Let $f \in \Aut(X^n)$ and $\tau \in \Aut(X^{(n)})$
be automorphisms such that the diagram
\[
\begin{tikzcd}
X^n \ar{d}{\rho} \ar{r}{f} & X^{n} \ar{d}{\rho} \\
X^{(n)} \ar{r}{\tau} &  X^{(n)}
\end{tikzcd}
\]
commutes. Then one of the following holds:
\begin{enumerate}
\item[(a)]
$f = \alpha \circ (g \times \cdots \times g)$ for some $g \in \Aut(X)$ and $\alpha \in \mathrm{S}_n$, or
\item[(b)] $n=2$ and $X = C_1 \times C_2$ for smooth curves $C_1, C_2$, and up to composing with an automorphism as in (a), the map
$f$ acts on $X^2 \cong C_1^2 \times C_2^2$ by $\mathrm{id}_{C_1^2} \times \alpha$ where $\alpha \in \mathrm{S}_2$ switches the factors on $C_2^2$.
\end{enumerate}
\end{cor}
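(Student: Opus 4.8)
The plan is to combine the product decomposition supplied by Proposition~\ref{Prop_Split_Auto} with the single extra constraint coming from the hypothesis. Since $\rho\circ f=\tau\circ\rho$, the automorphism $f$ carries $\rho$-fibres to $\rho$-fibres, hence maps $\mathrm{S}_n$-orbits to $\mathrm{S}_n$-orbits; on the free locus this says precisely that $f$ normalizes the subgroup $W\subset\Aut(X^n)$ permuting the $n$ factors. Conjugation by $f$ therefore defines an automorphism $\phi\in\Aut(W)$ with $f\circ w=\phi(w)\circ f$ for all $w\in W\cong\mathrm{S}_n$, and the whole argument reduces to intersecting this normalizing condition with the two shapes of $f$ produced by Proposition~\ref{Prop_Split_Auto}.

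First I would dispose of case (a) of Proposition~\ref{Prop_Split_Auto}, where $f=\alpha\circ(f_1\times\cdots\times f_n)$ with $f_i\in\Aut(X)$ and $\alpha\in\mathrm{S}_n$. Viewing $f$ inside the wreath product $\Aut(X)\wr\mathrm{S}_n=\Aut(X)^n\rtimes\mathrm{S}_n$ and replacing $f$ by $\alpha^{-1}\circ f$ (which still normalizes $W$), the requirement $f\,w\,f^{-1}\in W$ for every $w$ makes the $\Aut(X)^n$-component $(f_1,\dots,f_n)$ invariant under the permutation action of $\mathrm{S}_n$, forcing all $f_i$ to coincide with a single $g\in\Aut(X)$. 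This is exactly the mechanism behind Lemma~\ref{thm:GroupTheoryLemma} (an automorphism preserving $e\rtimes\mathrm{S}_n$ preserves $\Aut(X)^n\rtimes e$), and it yields conclusion (a) of the corollary.

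The substantive case is (b) of Proposition~\ref{Prop_Split_Auto}, where $X\cong C_1\times C_2$; here I would pass to $X^n\cong C_1^n\times C_2^n$ and analyze the two blocks of curve-coordinates together with the position of $W$ in the ambient symmetric group. The group $W$ embeds as the diagonal $\Delta\mathrm{S}_n\subset\mathrm{S}_n\times\mathrm{S}_n$, acting by the same permutation on the $C_1$- and $C_2$-coordinates. Exactly as above, normalizing $W$ forces the curve-automorphisms in each block to be constant along the $W$-orbits, so that $f$ differs from a permutation by a diagonal product of curve-automorphisms, i.e.\ by a natural (case (a)) automorphism. It then remains to determine which permutations normalize $\Delta\mathrm{S}_n$: such an element must permute the two orbits (the $C_1$- and $C_2$-blocks), and the two permutations it induces on the blocks can differ only by an element of $Z(\mathrm{S}_n)$. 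For $n\geq 3$ this centre is trivial, so the block-permutations agree and $f$ is natural; when moreover $C_1\cong C_2$ the extra factor-swap of $X$ contributes only the global involution exchanging the two blocks, which is itself natural, so the conclusion is unchanged. For $n=2$, by contrast, $Z(\mathrm{S}_2)$ is non-trivial, the two block-permutations are allowed to differ by the transposition, and the normalizer gains exactly one new coset modulo the natural automorphisms; a representative is the involution switching the two $C_2$-coordinates while fixing the $C_1$-coordinates, i.e.\ $\id_{C_1^2}\times\alpha$ with $\alpha\in\mathrm{S}_2$, which is conclusion (b).

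The main obstacle I anticipate is the bookkeeping in this last $n=2$ product-of-curves situation, and specifically the subcase $C_1\cong C_2$. There $\Aut(X)=(\Aut C_1\times\Aut C_1)\rtimes\mathrm{S}_2$ contains the factor-swap, so the ambient permutation group on $X^2\cong C_1^4$ is $\mathrm{S}_4$ rather than $\mathrm{S}_2\times\mathrm{S}_2$, and one must check that, after composing with natural automorphisms (including the global swap), the only genuinely new automorphism remains the single transposition of one curve-factor's two coordinates, and that the ``other'' transposition $\delta_{C_1^2}\times\id_{C_2^2}$ is equivalent to it modulo a natural automorphism via the global swap. Verifying that the relevant normalizer is the order-$8$ centraliser of $\Delta\mathrm{S}_2$ in $\mathrm{S}_4$, and that precisely one nontrivial coset survives modulo case (a), is the computation on which the clean form of statement (b) rests.
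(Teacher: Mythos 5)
Your proof is correct, and it shares the paper's skeleton --- feed the diagram into Proposition~\ref{Prop_Split_Auto}, then kill the residual freedom using the triviality of the center of $\mathrm{S}_n$ for $n\geq 3$ --- but your first reduction is done differently. The paper equalizes the factor automorphisms geometrically: $\tau$ preserves the small diagonal of $X^{(n)}$, hence $f$ preserves $\{(x,\dots,x)\mid x\in X\}\subset X^n$, and evaluating the product form of $f$ there forces the $f_i$ (resp.\ the $g_i$ and $h_i$, blockwise) to coincide. You instead distill the diagram into the single group-theoretic statement that $f$ normalizes $W\cong\mathrm{S}_n$ in $\Aut(X^n)$, and use wreath-product conjugation both to equalize the factors and, in the product-of-curves case, to reduce to the normalizer of $\Delta\mathrm{S}_n$; the endgame is then identical, since the paper's relation $\sigma^{-1}\alpha\sigma=\alpha$ is exactly your condition that the two block permutations differ by an element of $Z(\mathrm{S}_n)$. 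One step in your version needs a sentence: commuting with $\rho$ only says $fwf^{-1}$ preserves every $\rho$-fibre; to get $fwf^{-1}\in W$ one should note that over the connected free locus $X^n\setminus D$ the map $\rho$ is a normal covering with deck group $\mathrm{S}_n$, so $fwf^{-1}$ is a deck transformation there, and by density it equals that permutation everywhere. Granting this, your route buys uniformity and self-containedness (no appeal to the tangent-space characterization of the small diagonal), at the cost of more group-theoretic bookkeeping; your closing computation (the order-$8$ centralizer of $\Delta\mathrm{S}_2$ in $\mathrm{S}_4$, one nontrivial coset modulo permutation parts of natural automorphisms, and $\delta\times\mathrm{id}$ equivalent to $\mathrm{id}\times\delta$ via the global swap) is correct and yields precisely conclusion (b).
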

\begin{proof}
Since the automorphism $\tau$ preserves the small diagonal in $X^{(n)}$
the map $f$ preserves the small diagonal
\[ D_{\text{small}} = \{ (x,\ldots, x) \mid x \in X \} \subset X^n. \]

If $f$ is as in Proposition~\ref{Prop_Split_Auto} (a) this
immediately implies the claim.
If $f$ is as in part (b) we consider the two cases more closely.
In case
$C_1 = C_2$ then for all $(c_1, c_2) \in X$ there exist $(d_1, d_2)$ such that
\[ \alpha \circ (g_1(c_1), g_2(c_2), \ldots, g_{2n-1}(c_1), g_{2n}(c_2) ) = (d_1, d_2, \ldots, d_1, d_2). \]
By choosing $c_1$ in the open subset of $C_1$ such that $g_1(c_1)$ is distinct from all the $g_2(c_2), g_4(c_2), \ldots, g_{2n}(c_2)$
for some fixed $c_2$ we find that
\[ g_1 = g_3 = \ldots = g_{2n-1}. \]
Similarly we have $g_2 = g_4 = \ldots = g_{2n}$.
Moreover, $\alpha$ must preserve parity.
In the case $C_1 \neq C_2$ we similarly find $g_1 = \ldots = g_n$ and $h_1 = \ldots = h_n$.

Under the identification $X^n = C_1^n \times C_2^n$ we conclude that
\[ f = (\alpha_1 \times \alpha_2) (g^{\times n} \times h^{\times n}) \]
for some $\alpha_1, \alpha_2 \in \mathrm{S}_n$ and $g \in \Aut(C_1), h \in \Aut(C_2)$.
After composing with an automorphism as in part (a) we hence can assume
\begin{equation} f = (\mathrm{id}_{C_1^n}, \alpha_2). \label{fdfs} \end{equation}

Since the automorphism $f$ descends to the symmetric product, for a point $z \in X^n$ the class of $f(z)$ in $X^{(n)}$
depends only on the class of $z \in X^{(n)}$.
Hence for all permutations $\sigma \in \mathrm{S}_n$ there exists $\tilde{\sigma} \in \mathrm{S}_n$ such that
\[ f(\sigma z) = \tilde{\sigma} f(z). \]
Using \eqref{fdfs} and remembering that under the identification $X^n \cong C_1^n \times C_2^n$ the symmetric group $\mathrm{S}_n$ acts diagonally we find
\[ (\tilde{\sigma} \times \tilde{\sigma})^{-1} (\mathrm{id}_{C_1^n} \times \alpha_2) (\sigma \times \sigma) = (\mathrm{id}_{C_1^n}, \alpha_2). \]
This gives $\sigma = \tilde{\sigma}$ and thus
\[ \sigma^{-1} \alpha \sigma = \alpha \]
Since $\sigma$ was arbitrary, $\alpha$ is in the center of $\mathrm{S}_n$. Since the center of the symmetric group is non-trivial only for $n=2$ the claim follows. 
\end{proof}

\subsection{Proof of Theorem~\ref{thm1} and \ref{thm2}}
Let $X$ be weak Fano or of general type and let $\sigma \in \Aut(X^{[n]})$.
By Proposition~\ref{Prop_Sym2} applied to Proposition~\ref{Prop_Sym1} the automorphism $\sigma$ descends to the symmetric product.
By Proposition~\ref{prop_lifting} this automorphism of the symmetric product lifts to an automorphism of the complement of the big diagonal in $X^n$
and by Proposition~\ref{prop_extension} it extends from there to an automorphism of $X^n$.
Theorem~\ref{thm1} and the uniqueness part of Theorem~\ref{thm2} now follow from the classification in Corollary~\ref{Cor_split}.
For the existence part of Theorem~\ref{thm2} the automorphism in Corollary~\ref{Cor_split} (b)
descends to $X^{(2)}$ and from there lifts to the Hilbert scheme by the universal property of the blow-up $X^{[2]} \to X^{(2)}$.
\qed

\subsection{Proof of Corollary~\ref{Cor1}}
Theorem~\ref{thm1} together with the following result immediately implies Corollary~\ref{Cor1}.

\begin{prop}
Let $X,Y$ be smooth projective surfaces and let $Y$ be weak Fano or of general type.
If $X^{[n]} \cong Y^{[n]}$, then $X \cong Y$.
\end{prop}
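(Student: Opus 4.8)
The plan is to reduce the statement to the construction of an isomorphism of symmetric products $\tau\colon X^{(n)}\xrightarrow{\sim}Y^{(n)}$. Once such a $\tau$ is available the conclusion is immediate: an isomorphism of varieties preserves the stratification by the dimension of the Zariski tangent space, and by the computation recalled in Section~\ref{section_preliminaries} the locus of maximal tangent dimension in a symmetric product $X^{(n)}$ is precisely the small diagonal $\Delta_{\text{small}}\cong X$. Hence $\tau$ must carry $\Delta_{\text{small}}\subset X^{(n)}$ onto $\Delta_{\text{small}}\subset Y^{(n)}$, and restricting it there yields $X\cong Y$. In particular the lifting and extension steps needed for the automorphism statements are unnecessary here.

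To descend $\sigma\colon X^{[n]}\xrightarrow{\sim}Y^{[n]}$, I would invoke the universal property of the Hilbert--Chow morphism established in the proof of Proposition~\ref{Prop_Sym1}, namely that $\epsilon_X$ is initial among morphisms out of $X^{[n]}$ contracting the fiber class $\alpha_X$. It therefore suffices to show that $\epsilon_Y\circ\sigma$ contracts the fibers of $\epsilon_X$, i.e.\ that $\sigma_\ast\alpha_X=\alpha_Y$. Granting this, the relation $(\epsilon_Y\circ\sigma)_\ast\alpha_X=\epsilon_{Y\ast}\alpha_Y=0$ produces a unique $\tau\colon X^{(n)}\to Y^{(n)}$ with $\tau\circ\epsilon_X=\epsilon_Y\circ\sigma$, and applying the same reasoning to $\sigma^{-1}$ exhibits an inverse, so $\tau$ is an isomorphism.

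The one substantial step---and the only place the hypothesis on $Y$ is used---is the identification $\sigma_\ast\alpha_X=\alpha_Y$, which is the isomorphism analogue of Proposition~\ref{Prop_Sym2}. Assume $Y$ is of general type (the weak Fano case being parallel, with $\omega_Y^{-1}$ and the anticanonical model in place of $\omega_Y$). Let $\varphi_Y\colon Y^{[n]}\to Y_{\mathrm{can}}^{(n)}$ be the morphism defined by a high power of $\omega_{Y^{[n]}}$. Since $\sigma$ is an isomorphism we have $\sigma^\ast\omega_{Y^{[n]}}\cong\omega_{X^{[n]}}$, and because $\omega_{X^{[n]}}=\epsilon_X^\ast(\omega_X)_{(n)}$ is pulled back along $\epsilon_X$, the composite $\varphi_Y\circ\sigma$ is constant on the fibers of $\epsilon_X$. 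Thus the class $\beta\coloneqq\sigma_\ast\alpha_X$ is contracted by $\varphi_Y$, and the exceptional divisor $E_X$ of $\epsilon_X$, which is irreducible and swept out by curves of class $\alpha_X$, has irreducible divisorial image $\sigma(E_X)$ contained in $\varphi_Y^{-1}(\Delta)=E_Y\cup\bigcup_i V_i^{Y}$, where $\Delta\subset Y_{\mathrm{can}}^{(n)}$ denotes the diagonal.

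Here the main difficulty is to rule out that $\beta$ is one of the other contracted classes, and this is handled exactly as in Proposition~\ref{Prop_Sym2}: among the components of $\varphi_Y^{-1}(\Delta)$ only $E_Y$ is a divisor, the loci $V_i^{Y}$ having codimension $\geq 2$, so $\sigma(E_X)=E_Y$ and the generic member of the sweeping family has class $\alpha_Y$, giving $\beta=m\,\alpha_Y$ with $m\geq 1$. Finally, since $\sigma_\ast$ is an isomorphism of integral homology it preserves primitivity, and $\alpha_X$ is primitive, so $m=1$ and $\beta=\alpha_Y$. I expect this matching of the two Hilbert--Chow contractions, using only that $Y$ (and not $X$) is weak Fano or of general type, to be the crux; the remaining steps are formal.
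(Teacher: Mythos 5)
Your skeleton matches the paper's proof: descend along the pluricanonical map, identify $\sigma_{\ast}\alpha_X=\alpha_Y$, apply the universal property from Proposition~\ref{Prop_Sym1} to get $\tau\colon X^{(n)}\overset{\sim}{\to}Y^{(n)}$, and conclude via the small diagonal. The descent and the endgame are fine, but there is a genuine gap at what you yourself identify as the crux. From ``$\beta=\sigma_{\ast}\alpha_X$ is contracted by $\varphi_Y$'' you conclude that $\sigma(E_X)\subset\varphi_Y^{-1}(\Delta)=E_Y\cup\bigcup_i V_i^{Y}$. That containment does not follow. What your argument gives is only that $\sigma(E_X)$ is swept by curves contracted by $\varphi_Y$, and the locus swept by such curves is strictly larger than $\varphi_Y^{-1}(\Delta)$: for each singular point $y_i$ of $Y_{\text{can}}$ it also contains the divisor $W_i$, the closure of $\{Z\in Y^{[n]}\mid \mathrm{supp}(Z)\cap C_{y_i}\neq\emptyset\}$ (the preimage of $D_{y_i}\subset Y_{\text{can}}^{(n)}$), swept by the curves obtained by moving one point along $C_{y_i}$. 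These $W_i$ are divisors, not of codimension $\geq 2$, and they are \emph{not} contained in $\varphi_Y^{-1}(\Delta)$, since a cycle with a single point on $C_{y_i}$ and otherwise generic reduced support maps to a reduced cycle in $Y_{\text{can}}^{(n)}$. So the codimension count borrowed from Proposition~\ref{Prop_Sym2} cannot be invoked yet: a priori $\sigma(E_X)=W_i$, in which case $\beta$ is the class of (a component of) $C_{y_i}$ inside $\mathrm{H}_2(Y^{[n]},\BZ)$, which is not a multiple of $\alpha_Y$, and your primitivity argument never gets started. Note that the $W_i$ exist exactly when $\omega_Y^{\pm 1}$ is big and nef but not ample, which the hypotheses allow; if $\omega_Y^{\pm1}$ is ample your argument is complete, but then the proposition is much easier.

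Ruling out $\sigma(E_X)=W_i$ is precisely the content of the ``Claim'' in the paper's proofs, and it is not formal. The paper observes that $\varphi_Y\circ\sigma$ is the pluricanonical (resp.\ anti-pluricanonical) map of $X^{[n]}$, deduces from the dimension of its image that $X$ is also of general type (resp.\ argues in parallel for weak Fano), and so obtains an isomorphism $\tau\colon X_{\text{can}}^{(n)}\overset{\sim}{\to}Y_{\text{can}}^{(n)}$ which must permute the irreducible components $\Delta, D_{x_i}$, resp.\ $\Delta, D_{y_j}$, of the singular loci. Excluding $\tau(\Delta_{X_{\text{can}}})=D_{y_j}$ is then a concrete comparison of these components: for $n\geq 3$ their normalizations $Y_{\text{can}}\times Y_{\text{can}}^{(n-2)}$ and $Y_{\text{can}}^{(n-1)}$ are distinguished (in the two-surface setting the paper uses normality or a Betti-number argument), and for $n=2$ one uses that $\omega_{Y_{\text{can}}^{(2)}}$ restricts to $\omega_{Y_{\text{can}}}^{\otimes 2}$ on the diagonal but to $\omega_{Y_{\text{can}}}$ on $D_{y_j}$. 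Only after this does one know $\sigma(E_X)\subset\varphi_Y^{-1}(\Delta)$, at which point your codimension and primitivity arguments do finish the proof. As it stands, your proposal presupposes this step rather than proving it, and it is exactly the step where the hypothesis on $Y$ does its work.
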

%
%

\begin{proof}
We assume that $Y$ is of general type, the case where $Y$ is weak Fano is parallel.
Let $\sigma \colon X^{[n]} \overset{\sim}{\to} Y^{[n]}$ be an isomorphism.
Since $\sigma^{\ast} \omega_{Y^{[n]}} \cong \omega_{X^{[n]}}$ we have a commutative diagram
\[
\begin{tikzcd}
X^{[n]} \ar{d} \ar{r}{\sigma} & Y^{[n]} \ar{d} \\
X_{\text{can}}^{(n)} \ar{r}{\tau} &  Y_{\text{can}}^{(n)}
\end{tikzcd}
\]
where $\tau$ is an isomorphism and we let $X_{\text{can}}$ and $Y_{\text{can}}$ denote the canonical models of $X$ and $Y$ respectively.
Since $Y_{\text{can}}^{(n)}$ is of dimension $2n$ we find that $X$ is also of general type.
Let $x_i$ and $y_j$ be the singular points of $X_{\text{can}}$ and $Y_{\text{can}}$ respectively.
Then $\tau$ induces an isomorphism of the singular loci of $X_{\text{can}}^{(n)}$ and $Y_{\text{can}}^{(n)}$:
\[ \tau \colon \bigcup_i D_{x_i} \cup \Delta_{X_\text{can}} \, \xrightarrow{\sim} \, \bigcup_j D_{y_j} \cup \Delta_{Y_\text{can}}. \]
where the $D_x$ are defined as in \eqref{3452354}.
We claim that $\tau(\Delta_{X_{\text{can}}}) = \Delta_{Y_{\text{can}}}$.
Indeed, if $n=2$ then $D_{x_i} \cong \Delta_{X_{\text{can}}} \cong X_{\text{can}}$, so $X_{\text{can}} \cong Y_{\text{can}}$
and the claim follows as in the proof of Proposition~\ref{Prop_Sym2}.
If $n \geq 3$, we can either use that $D_{x_i}$ is normal while the diagonal is not, or
argue as follows. Assume that
\[ \tau( \Delta_{X_{\text{can}}}) = D_{y_i}, \quad \tau( \Delta_{Y_{\text{can}}} ) = D_{x_i}. \]
Then from the description of their normalizations in \eqref{kggh} we have the equality of Betti numbers 
\[ 2 \mathrm{b}_i(X_{\text{can}}) = \mathrm{b}_i(Y_{\text{can}}) \ \ \text{ and } \ \ \mathrm{b}_i(X_{\text{can}}) = 2 \mathrm{b}_i(Y_{\text{can}}) \]
for $i=1$ so $\mathrm{b}_1(X_{\text{can}}) = \mathrm{b}_1( Y_{\text{can}}) = 0$, hence the same equations hold also for $i=2$ which is impossible since $\mathrm{b}_2(X_{\text{can}}) > 0$.

By the claim, $\tau$ preserves the diagonal, hence by an argument parallel to the proof of Proposition~\ref{Prop_Sym2}, we find that
$\sigma$ preserves the class of a $\p^1$-fiber of the Hilbert--Chow morphism $X^{[n]} \to X^{(n)}$.
Then arguing as in Proposition~\ref{Prop_Sym1} we find that $\sigma$ descends to an isomorphism
$X^{(n)} \overset{\sim}{\to} Y^{(n)}$.
Since this automorphism sends the small diagonal to the small diagonal and the small diagonal is isomorphic to the underlying surface we are done.
\end{proof}

\section{The Hilbert scheme of \texorpdfstring{$2$}{2} points of \texorpdfstring{$\p^n$}{projective space}}
The Hilbert scheme of $2$ points of $\p^n$ is isomorphic to the quotient of the blow-up of $\p^n \times \p^n$ along the diagonal,
\[ (\p^n)^{[2]} = \mathrm{Bl}_{\Delta}( \p^n \times \p^n ) / \mathrm{S}_2. \]
Since $(\p^n)^{[2]}$ is rational we find
\[ \Pic (\p^n)^{[2]} = \mathrm{H}^2( (\p^n)^{[2]} , \BZ) \cong \BZ^{\oplus2}. \]

\begin{proof}[Proof of Theorem~\ref{Hilb2Pn}]
Let $\sigma \colon (\p^n)^{[2]} \overset{\sim}{\to} (\p^n)^{[2]}$ be an automorphism.
The Hilbert scheme admits the following two contractions: the Hilbert--Chow morphism
\[ f_1 \colon (\p^n)^{[2]} \to (\p^n)^{(2)}  \]
and the morphism
\[ f_2 \colon (\p^n)^{[2]} \to \mathrm{Gr}(2,n+1) \]
that sends a subscheme to the line passing through it.
By pulling back polarizations of the targets along $f_1, f_2$ we obtain two divisors on the Hilbert scheme.
Both maps contract curves so both divisors lie in the boundary of the nef cone of $(\p^n)^{[2]}$.
Since the Picard group is rank $2$ these divisors form precisely the extremal rays of the nef cone.

Since the automorphism $\sigma$ preserves the nef cone, $\sigma$ up to scaling either preserves these divisors or interchanges them.
However, since the contractions above are non-isomorphic (they have non-isomorphic images), $\sigma$ cannot interchange them hence must preserve them up to scaling.
Since $\sigma$ also preserves the divisibility, we find that $\sigma$ fixes the two divisors, so $\sigma$ fixed the Picard group.
In particular, from
\[ \sigma^{\ast} f_1^{\ast} L = f_1^{\ast} L \]
where $L$ is an ample divisor on $(\p^n)^{(2)}$ we find that $\sigma$ descends to
an automorphism of the symmetric product $\tau \colon (\p^n)^{(2)} \to (\p^n)^{(2)}$.
Arguing as in Section~\ref{section_p2} this automorphism lifts to $\p^n \times \p^n$ where it has to be of the form $\alpha \circ (f,f)$ for some $\alpha \in \mathrm{S}_2$ and $f \in \Aut( \p^n )$.
Hence $\sigma$ is natural.
\end{proof}

\end{document}